\documentclass[12pt]{amsart}

\pdfoutput=1

\usepackage{enumerate}
\usepackage[nocompress]{cite}
\usepackage{hyperref}
\usepackage[margin=3.14cm]{geometry}
\usepackage{microtype}

\theoremstyle{plain}
\newtheorem{theorem}{Theorem}[section]
\newtheorem{proposition}[theorem]{Proposition}
\newtheorem{lemma}[theorem]{Lemma}
\newtheorem{corollary}[theorem]{Corollary}
\newtheorem{conjecture}[theorem]{Conjecture}

\theoremstyle{definition}
\newtheorem{definition}[theorem]{Definition}
\newtheorem{remark}[theorem]{Remark}
\newtheorem{example}[theorem]{Example}

\newcommand		{\p}[1]	{\left(#1\right)}
\newcommand		{\ppp}[1] {\left\{#1\right\}}
\newcommand		{\abs}[1]{\left|#1\right|}
\newcommand 	{\norm}[1]{\left\lVert#1\right\rVert}
\newcommand    	{\R} {\mathbb R}
\newcommand    	{\C} {\mathbb C}
\newcommand    	{\N} {\mathbb N}
\newcommand    	{\Z} {\mathbb Z}
\newcommand 	{\bounded} {\mathbb B}
\renewcommand   {\O} {\mathcal O}
\newcommand    	{\as} {\quad \text{as} \ }
\DeclareMathOperator*{\argmax}{arg\,max}

\title{Bounded Power Series on the Real Line}
\author{Davide Sclosa}
\address{Vrije Universiteit Amsterdam, De Boelelaan 1111, 1081 HV Amsterdam, The Netherlands}
\subjclass{30H05, 30B10, 13J05}
\date{\today}


\begin{document}

\begin{abstract}
We investigate power series that converge to a bounded function on the real line.
First, we establish relations between coefficients of a power series
and boundedness of the resulting function;
in particular, we show that boundedness can be prevented by certain Tur\'an inequalities
and, in the case of real coefficients, by certain sign patterns.
Second, we show that the set of bounded power series naturally supports three
topologies and that these topologies are inequivalent and incomplete.
In each case, we determine the topological completion.
Third, we study the algebra of bounded power series, revealing the
key role of the backward shift operator.
\end{abstract}

\maketitle
\thispagestyle{empty}


\section{Introduction}
Relating the coefficients of a power series to the values of the corresponding
function has a long history~\cite{szego1936some}.
In 1885, Cauchy proved that
a power series bounded on the complex plane must be constant,
and thus all but one coefficients must be equal to zero~\cite{cauchy1885algebraische};
the result is now known as Liouville's theorem.
In 1917, Schur characterized the coefficients of power series bounded in the
interior of the unit circle~\cite{dym2003contributions, schur1917potenzreihen, szego1936some}.
In~1940, Boas studied power series of exponential type bounded on the real
line~\cite[Ch.\ 7,8]{boas2011entire}.
Despite the popularity of some power series bounded on the real line
but not of exponential type,
like the Gaussian~$\exp(-\pi x^2)$ or the Uppuri-Carpenter generating
function~$\exp(1-\exp(x))$~\cite{uppuluri1969numbers,de2016wilf,amdeberhan2013complementary},
this general case remains largely unexplored.

We say that a power series
\begin{equation} \label{eq:main}
	f(x) = a_0 + a_1 x + a_2 x^2 + \cdots + a_n x^n + \cdots
\end{equation}
is \emph{bounded on the real line} if it converges for every~$x\in \R$
and if the corresponding function~$f:\R\to\C$ is bounded.
Boundedness on a line is a remarkable property: infinitely many monomials
have to balance each other out, uniformly, on an unbounded set.
In contrast to convergence, boundedness is not a tail property:
it is not preserved by changing finitely many coefficients.
Indeed, dividing~\eqref{eq:main} by~$x^n$ for some~$n\geq 1$ gives
\begin{equation} \label{eq:tails_determine_heads}
	a_n = -\lim_{\abs{x}\to \infty} \p{\sum_{k\geq 1} a_{n+k} x^k}, \quad \forall n\geq 1,
\end{equation}
implying that the tail~$(a_k)_{k>n}$ determines~$a_1,\ldots,a_n$
(the coefficient~$a_0$ is arbitrary).

On the opposite, tails are not uniquely determined by heads.
Indeed, for every polynomial~$p(x) = a_0 + a_1 x + \cdots + a_n x^n$ the power series
\begin{equation} \label{eq:everything_head}
	p(x) \exp(-x^{2n})
		= a_0 + a_1 x + \cdots + a_n x^n + \cdots
\end{equation}
is bounded on the real line, implying that every finite complex sequence is the head of a
bounded power series, and thus that tails are not uniquely determined.

While boundedness is not a tail property itself, it entails some tail
properties. In Section~\ref{sec:convexity} we related boundedness
to certain \emph{Tur\'an inequalities}~\cite{szego1948inequality, karp2010log}, namely
\begin{equation} \label{eq:convexity}
	\abs{a_{n+1} a_{n-1}} \leq \chi \abs{a_{n}}^2, \quad a_{n-1}\neq 0, \quad \forall n\geq N,
\end{equation}
where~$N\geq 1$ and~$\chi>0$ are constants. The constant~$\chi$ controls the logarithmic
concavity of the sequence~$(a_n)_{n\geq 0}$.
For simplicity,
we call a power series \emph{real} (\emph{complex}) if its coefficients are real (complex).
This is the main result of Section~\ref{sec:convexity}:

\begin{theorem} \label{thm:convexity}
Let~$f(x)=\sum_{n\geq 0} a_n x^n$ be a real power series, bounded on the real line,
and satisfying~\eqref{eq:convexity} for
some~$N\geq 1$ and~$0<\chi< 1$. Let
\[
	\vartheta(\chi) = \sup \ppp{L \in \Z : \sum_{1\leq k\leq L} \chi^{\frac{k^2}{2}} < 1},
	\quad
	\psi(\chi) = \inf \ppp{L\in \Z : \sum_{k> L} \chi^{\frac{k^2}{2}} < \frac{1}{2}},
\]
and let~$L(n)$ denote the supremum of the integers~$L\geq 0$
for which the tail~$(a_k)_{k\geq n}$
contains~$L$ consecutive elements with the same sign. Then
\begin{align} \label{eq:max_min}
	\vartheta(\chi) < \lim_{n\to\infty} L(n) \leq 2 \psi(\chi).
\end{align}
\end{theorem}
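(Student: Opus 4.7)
The plan is to derive both inequalities from a common Gaussian-envelope estimate for $\rho_n:=\abs{a_n x^n}$. The first step is to iterate the Tur\'an inequality~\eqref{eq:convexity}: for any large $m$ with $a_{m-1},a_m\neq 0$, choosing $\abs{x}=\chi^{-1/2}/\abs{a_m/a_{m-1}}$ will yield
\[
\rho_{m+k}\leq \chi^{k^2/2}\rho_m\qquad(k\in\Z\setminus\{0\},\ m+k\geq N),
\]
so that the terms concentrate around $n=m$ in a Gaussian profile. The chosen $\abs{x}$ tends to infinity as $m$ does (the ratios $\abs{a_m/a_{m-1}}$ decay geometrically by~\eqref{eq:convexity}), and since $f$ is assumed non-constant (the constant case being trivial), $\rho_m\to\infty$ as well.

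For the upper bound $\lim L(n)\leq 2\psi(\chi)$, I would assume the opposite---that runs of length at least $2\psi(\chi)+1$ occur at arbitrarily large indices---and derive a contradiction. Centering $m$ in such a run so that $[m-\psi(\chi),m+\psi(\chi)]$ lies in the run and taking $x$ as above with $\mathrm{sgn}(x)$ chosen so that $a_{m+k}x^{m+k}$ is non-negative for $\abs{k}\leq\psi(\chi)$, the in-run contribution to $f(x)$ has absolute value at least $\rho_m$, while the out-of-run contribution satisfies
\[
\Bigl|\sum_{\abs{k}>\psi(\chi)}a_{m+k}x^{m+k}\Bigr|\leq 2\rho_m\sum_{k>\psi(\chi)}\chi^{k^2/2}+O(1)<\rho_m+O(1)
\]
by the definition of $\psi(\chi)$. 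This yields $|f(x)|\geq c\rho_m-O(1)$ with $c>0$, contradicting boundedness since $\rho_m\to\infty$ along the prescribed subsequence.

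For the lower bound $\lim L(n)>\vartheta(\chi)$, I would assume instead that eventually every maximal same-sign run has length at most $\vartheta(\chi)$. Take $m$ large, $x>0$ as above, and group the terms of $f(x)$ by maximal same-sign runs $(J_i)_{i\in\Z}$ with $J_0$ containing~$m$. Writing $R(J_i)=\sum_{n\in J_i}\rho_n$, $Q_0=R(J_0)$, and $Q_j=R(J_j)+R(J_{-j})$ for $j\geq 1$, the series becomes $\mathrm{sgn}(a_m)\,f(x)=\sum_{j\geq 0}(-1)^j Q_j$, with $Q_0\geq\rho_m$ and $R(J_{\pm 1})\leq\rho_m\sum_{k=1}^{\vartheta(\chi)}\chi^{k^2/2}$. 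Using the envelope, the block sums $Q_j$ are monotone decreasing from some index onward, and the alternating-series structure then delivers $|f(x)|\geq c(\chi)\rho_m$ with $c(\chi)>0$; as $\rho_m\to\infty$ this again contradicts boundedness.

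The main obstacle will be the lower bound. Unlike the upper bound, the naive telescoping $|f(x)|\geq Q_0-Q_1-Q_2-\cdots$ can be negative because $Q_1$ alone may reach $2\rho_m\sum_{k=1}^{\vartheta(\chi)}\chi^{k^2/2}$, a factor of two worse than what the definition of $\vartheta(\chi)$ permits. I expect to have to exploit the genuine alternation of the~$Q_j$'s together with their eventual monotonicity---turning a symmetric two-sided estimate into a one-sided comparison against $\sum_{k=1}^{\vartheta(\chi)}\chi^{k^2/2}<1$---in order to extract a strictly positive lower bound on $|f(x)|/\rho_m$.
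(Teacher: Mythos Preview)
Your Gaussian-envelope setup and the upper-bound argument are correct and match the paper. The gap is in the lower bound: the ``eventual monotonicity of $Q_j$'' you hope to use does not follow from your envelope. The Tur\'an inequality is one-sided, so the relation $\rho_{m+k}\le\chi^{k^2/2}\rho_m$ gives only \emph{upper} bounds on individual terms; there are no matching lower bounds, and the symmetric blocks $Q_j=R(J_j)+R(J_{-j})$ need not decrease. Even granting monotonicity, the alternating-series estimate yields only $\operatorname{sgn}(a_m)f(x)\ge Q_0-Q_1$, and when $J_0=\{m\}$ you are back to $Q_1\le 2\Theta\rho_m$ with $\Theta=\sum_{1\le k\le\vartheta(\chi)}\chi^{k^2/2}$---precisely the factor-of-two obstruction you identified, unresolved.

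The paper's fix is not to analyse the $Q_j$ at all. Iterating \eqref{eq:convexity} with a general base point (your derivation with $q=p$ instead of $q=0$) gives the \emph{sliding} comparison $\rho_{m+p+k}\le\chi^{k^2/2}\rho_{m+p}$ for every $p\ge 0$ and $k\ge 1$, so that each opposite-sign run of length $\le\vartheta(\chi)$ is dominated by $\Theta$ times the \emph{single adjacent term on the $m$-side}, not by $\Theta\rho_m$. One then splits the series into the half with indices $\ge m+1$ and the half with indices $\le m$; on each half separately, every negative run is absorbed by the positive term next to it, giving a one-sided bound with factor $1-\Theta>0$ and no doubling. For both halves to begin with the correct sign one needs $a_m$ and $a_{m+1}$ to share a sign; the paper first disposes of the eventually-alternating case via $f(-x)$ and then picks $m$ among the infinitely many indices with $a_ma_{m+1}>0$. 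These two moves---the sliding comparison and the choice of $m$ at a same-sign junction---are the missing ingredients in your outline.
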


The proof of Theorem~\ref{thm:convexity} relies on a discrete Legendre transform,
see Remark~\ref{rem:Legendre}.
This theorem is useful to show that certain power series
are not bounded:

\begin{corollary} \label{cor:convexity}
A real power series satisfying~\eqref{eq:convexity}
for some~$N\geq 1$ and~$0<\chi\leq \frac{1}{2}$
is unbounded on the real line.
\end{corollary}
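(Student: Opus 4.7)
The plan is to argue by contradiction using Theorem~\ref{thm:convexity}. Suppose $f$ is bounded on the real line. Since $0 < \chi < 1$, the theorem applies and yields
\[
	\vartheta(\chi) < \lim_{n\to\infty} L(n) \leq 2\psi(\chi),
\]
so in particular the strict inequality $\vartheta(\chi) < 2\psi(\chi)$ must hold. The task is then to show that this inequality fails whenever $\chi \leq 1/2$.

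I would begin with the borderline case $\chi = 1/2$, where everything can be checked numerically. For $\vartheta$, I would examine the partial sums $\sum_{k=1}^{L} 2^{-k^2/2}$: one gets $2^{-1/2} + 2^{-2} < 1$ but $2^{-1/2} + 2^{-2} + 2^{-9/2} > 1$, so $\vartheta(1/2) = 2$. For $\psi$, I would examine the tail sums $\sum_{k > L} 2^{-k^2/2}$: the full sum exceeds $1/2$, while $\sum_{k \geq 2} 2^{-k^2/2} = 2^{-2} + 2^{-9/2} + \cdots < 1/2$, so $\psi(1/2) = 1$. This gives $\vartheta(1/2) = 2 = 2\psi(1/2)$, contradicting the strict inequality and proving the corollary at $\chi = 1/2$.

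To extend the contradiction to every $\chi \in (0, 1/2)$, I would invoke a monotonicity argument. Since $\chi \mapsto \chi^{k^2/2}$ is strictly increasing on $(0,1)$ for each $k \geq 1$, every partial sum and every tail sum appearing in the definitions of $\vartheta$ and $\psi$ is monotone in $\chi$. Hence $\vartheta$ is non-increasing and $\psi$ is non-decreasing as functions of $\chi$, so for $\chi \leq 1/2$,
\[
	\vartheta(\chi) \geq \vartheta(1/2) = 2 = 2\psi(1/2) \geq 2\psi(\chi),
\]
again contradicting $\vartheta(\chi) < 2\psi(\chi)$.

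The main (indeed the only) substantive step is the explicit numerical verification at $\chi = 1/2$, where one must confirm that the third partial sum crosses $1$ and that the tail from $k = 2$ lies below $1/2$; once this is secured, the corollary follows immediately from Theorem~\ref{thm:convexity} together with the elementary monotonicity of $\chi^{k^2/2}$.
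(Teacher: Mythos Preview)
Your argument is correct and follows the same route as the paper: both reduce, via Theorem~\ref{thm:convexity}, to showing $\vartheta(\chi)\geq 2$ and $\psi(\chi)\leq 1$ for every $0<\chi\leq \tfrac12$. The only tactical difference is in the verification of $\psi(\chi)\leq 1$: the paper uses a self-referential bootstrap inequality on the tail $S=\sum_{k>1}\chi^{k^2/2}$ valid for all $\chi\leq\tfrac12$, whereas you check the single endpoint $\chi=\tfrac12$ and invoke the monotonicity of $\psi$; your route is slightly cleaner, though you should make the endpoint check rigorous (for instance $\sum_{k\geq 2}2^{-k^2/2}<\sum_{k\geq 2}2^{-k}=\tfrac12$, since $k^2/2\geq k$ with strict inequality for $k\geq 3$), and note that only $\vartheta(1/2)\geq 2$ is needed, so the verification that the third partial sum exceeds~$1$ is superfluous.
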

\begin{proof}
Let~$0<\chi\leq \frac{1}{2}$.
By~\eqref{eq:max_min},
it is enough to prove that~$\vartheta(\chi)\geq 2$ and~$\psi(\chi)\leq 1$.
The inequality~$\vartheta(\chi)\geq 2$ follows
from~$\chi^{\frac{1}{2}}+\chi^{\frac{2^2}{2}}\leq 2^{-\frac{1}{2}} + 2^{-2} \leq 1$.
To prove~$\psi(\theta)\leq 1$, let~$S= \sum_{k>1} \chi^{\frac{k^2}{2}}$ and note that
\[
	S < \chi^{\frac{2^2}{2}} \p{1 + \chi + \sum_{k>3} \chi^{\frac{k^2-2^2}{2}}}
	< \chi^{\frac{2^2}{2}} \p{1 + \chi + S} \leq \frac{1}{4} \p{1+\frac{1}{2} + S}.
\]
It follows that~$S<\frac{1}{2}$ and therefore that~$\psi(\theta)\leq 1$, completing the proof.
\end{proof}

The constant~$1/2$ in Corollary~\ref{cor:convexity}, which bounds the coefficient concavity
of bounded power series, is not sharp: determining the optimal bound
is proposed as an open problem at the end of Section~\ref{sec:convexity}.

Corollary~\ref{cor:convexity} implies that~$\sum_{n\geq 0} \pm \rho^{n^2} x^n$ is unbounded
if~$0\leq \rho\leq 2^{-\frac{1}{2}}$ regardless of the choice of signs;
this generalizes the case~$\rho=1/3$ proven by P.~Majer~\cite{mathoverflow}.

The coefficient sequence of a real bounded power series
must change sign infinitely many times. If Tur\'an inequalities hold, something stronger
is true:

\begin{corollary} \label{cor:signs}
Let~$f(x)=\sum_{n\geq 0} a_n x^n$ be a real power series, bounded on the real line,
and satisfying~\eqref{eq:convexity} for
some constants~$N\geq 1$ and~$0<\chi< 1$.
Then~$(a_n)_{n\geq 0}$ does not contain
arbitrarily long subsequences of consecutive coefficients with the same sign,
nor arbitrarily long subsequences of consecutive coefficients alternating sign.
\end{corollary}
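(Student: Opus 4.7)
The plan is to deduce both statements from Theorem~\ref{thm:convexity}, the second one via a simple symmetry trick.

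For the first assertion, I would observe that the hypotheses of Theorem~\ref{thm:convexity} are exactly those assumed here, so the upper bound in~\eqref{eq:max_min} gives $\lim_{n\to\infty} L(n) \leq 2\psi(\chi)$. Since $0<\chi<1$, the defining series $\sum_{k>L} \chi^{k^2/2}$ converges, so $\psi(\chi)$ is finite. Hence $L(n)$ is eventually bounded by the finite constant $2\psi(\chi)$, which rules out arbitrarily long runs of consecutive coefficients with the same sign from some index onward; together with the fact that any initial segment is finite, this yields the first claim.

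For the second assertion, the key observation is that boundedness on the real line is invariant under the reflection $x\mapsto -x$: the power series $\tilde f(x)=f(-x)=\sum_{n\geq 0} \tilde a_n x^n$ with $\tilde a_n = (-1)^n a_n$ is again a real power series bounded on $\R$. Moreover, condition~\eqref{eq:convexity} involves only the absolute values $|a_{n\pm 1}|$ and $|a_n|$, so it transfers verbatim to the sequence $(\tilde a_n)$ with the same constants $N$ and $\chi$. Applying the first assertion to $\tilde f$ shows that $(\tilde a_n)$ contains no arbitrarily long runs of same-sign consecutive coefficients; but a run of same-sign consecutive elements of $(\tilde a_n)$ corresponds exactly to a run of alternating-sign consecutive elements of $(a_n)$, which is the second claim.

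There is no real obstacle here: both parts are direct consequences of Theorem~\ref{thm:convexity}, the only nontrivial point being the symmetry $x\mapsto -x$ that turns the alternating-sign case into the same-sign case and leaves the Turán-type hypothesis invariant.
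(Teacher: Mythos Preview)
Your proof is correct and follows essentially the same approach as the paper: the first assertion comes from the upper bound in~\eqref{eq:max_min} together with the finiteness of~$\psi(\chi)$, and the second from the substitution $x\mapsto -x$, which preserves both boundedness and the Tur\'an hypothesis~\eqref{eq:convexity}. Your write-up is in fact slightly more careful than the paper's, which asserts $L(0)\leq 2\psi(\chi)$ directly while the theorem only bounds $\lim_{n\to\infty} L(n)$; your observation that the initial segment is finite fills this small gap.
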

\begin{proof}
The first statement follows from~\eqref{eq:max_min} since~$L(0)\leq 2\psi(\chi)<\infty$.
The second statement follows from replacing~$f(x)$ by~$f(-x)$.
\end{proof}

The techniques developed in Section~\ref{sec:convexity}
might help deducing inequalities
for combinatorial sequences from the boundedness of their generating function.
In contrast to Corollary~\ref{cor:signs},
we conjecture that~$\exp(1-\exp(x))$
contains arbitrarily long subsequences of consecutive coefficients with the same sign;
see the related Wilf's Conjecture~\cite{de2016wilf, amdeberhan2013complementary} and
Remark~\ref{rem:combinatorics} for details.

Let~$\bounded$ denote the set of complex power series bounded on the real line.
Equation~\eqref{eq:everything_head} suggests
that the set~$\bounded$ is dense in the set of complex power series with respect to
some topology.
Section~\ref{sec:topology} discusses three natural
topologies on~$\bounded$:

\begin{theorem} \label{thm:topology}
Given a power series bounded on the real line~$f(x)=\sum_{n\geq 0} a_n x^n$, let
\[
	\norm{f}_{\ell^1} = \sum_{n\geq 0} \abs{a_n},
	\quad \norm{f}_{\infty,\R} = \sup_{x\in \R} \abs{f(x)}.
\]
The following facts are true:
\begin{enumerate} [(i)]
\item The set~$\bounded$ endowed with~$\norm{\cdot}_{\ell^1}$ is (isometric to)
a dense subset of~$\ell^1(\N,\C)$; \label{thm:topology:l1}
\item The set~$\bounded$ endowed with~$\norm{\cdot}_{\infty,\R}$ is (isometric to)
a dense subset of~$C^\mathrm{b}(\R,\C)$, the set of bounded continuous complex-valued
functions on the real line;
\label{thm:topology:R}
\item The set~$\bounded$ endowed with uniform convergence on compact subsets of~$\C$
is (isometric to) a dense subset of the set entire functions. \label{thm:topology:C}
\end{enumerate}
Moreover, the three topologies are inequivalent.
\end{theorem}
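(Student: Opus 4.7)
The plan is to verify each of the three claims independently and then distinguish the topologies with three explicit sequences. The canonical injections $f\mapsto(a_n)_{n\geq 0}$, $f\mapsto f|_\R$, and $f\mapsto f$ are each tautologically isometric (or, in (iii), a topological embedding onto its image); the map in (i) lands in $\ell^1(\N,\C)$ because every $f\in\bounded$ is entire, so Cauchy--Hadamard gives $|a_n|^{1/n}\to 0$ and thus $(a_n)\in\ell^1$.

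Parts (i) and (iii) follow from the damping construction behind~\eqref{eq:everything_head}. For (i), given a target $(b_n)\in\ell^1$ and $\delta>0$, I would truncate to $p(x)=\sum_{k\leq N}b_kx^k$ with negligible $\ell^1$-tail and set $f_\varepsilon(x)=p(x)\exp(-\varepsilon x^{2N})\in\bounded$; because the degrees $k+2Nj$ appearing in the expansion are pairwise distinct (since $k<2N$), the $\ell^1$-norm factorizes and one obtains $\|f_\varepsilon-p\|_{\ell^1}=\|p\|_{\ell^1}(e^{\varepsilon}-1)$, which vanishes as $\varepsilon\to 0$. For (iii), given an entire target $g$ and a compact $K\subset\{|z|\leq R\}$, truncate to $g_N$ with $\sup_K|g-g_N|<\delta/2$ and apply the same damping: $\sup_K|g_N(1-\exp(-\varepsilon x^{2N}))|\leq\sup_K|g_N|\cdot\varepsilon R^{2N}e^{\varepsilon R^{2N}}$ is $<\delta/2$ for $\varepsilon$ small, yielding a $\bounded$-approximation uniform on $K$. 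Part (ii) is more delicate, since boundedness on all of $\R$ rules out such elementary truncation arguments; here I would invoke Carleman's approximation theorem: for any continuous $g\colon\R\to\C$ and any continuous positive $\varepsilon\colon\R\to(0,\infty)$, there is an entire function $f$ with $|f(x)-g(x)|<\varepsilon(x)$ for every $x\in\R$. Applied with $g\in C^{\mathrm{b}}(\R,\C)$ and constant $\varepsilon$, the approximant $f$ satisfies $\|f\|_{\infty,\R}\leq\|g\|_{\infty,\R}+\varepsilon$ and so automatically lies in $\bounded$.

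For the inequivalence, three examples in $\bounded$ suffice. The sequence $h_k(x)=\cos(kx)/k$ converges to $0$ in $\|\cdot\|_{\infty,\R}$ but $\|h_k\|_{\ell^1}=\cosh(k)/k\to\infty$ and $\sup_{|z|\leq 1}|h_k(z)|=\cosh(k)/k\to\infty$, so $\|\cdot\|_{\infty,\R}$-convergence entails neither of the other two. The translated Gaussians $g_k(x)=\exp(-(x-k)^2)$ satisfy $\|g_k\|_{\infty,\R}=1$ yet $\sup_{|z|\leq R}|g_k(z)|\leq e^{R^2-(k-R)^2}\to 0$, so compact convergence does not imply $\|\cdot\|_{\infty,\R}$-convergence. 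Finally, $f_k(x)=(x/2)^k\exp(-x^{2N_k})$ with $N_k$ an odd integer greater than $k/2$ gives $\|f_k\|_{\ell^1}=e/2^k\to 0$ but $|f_k(2i)|=\exp(2^{2N_k})\to\infty$, so $\ell^1$-convergence does not imply compact convergence. Conversely, the Cauchy estimate $\|f\|_{\ell^1}\leq\frac{R}{R-1}\sup_{|z|\leq R}|f(z)|$ (valid for any $R>1$) shows that compact convergence does imply $\ell^1$-convergence, so the three topologies sit pairwise properly. The main obstacle is part (ii): it is a genuine uniform approximation statement on all of $\R$ with no regularity hypotheses on the target, so a Gaussian-convolution or Stone--Weierstrass argument would only cover uniformly continuous $g$, and Carleman's theorem is the cleanest way to reach the full $C^{\mathrm{b}}(\R,\C)$.
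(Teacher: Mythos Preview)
Your argument is correct and follows the same strategy as the paper: damp polynomials by a Gaussian-type factor for (i) and (iii), invoke Carleman's theorem for (ii), and separate the three topologies with explicit sequences. The paper uses the fixed damper $\exp(-\lambda z^{2})$ (applied monomial by monomial, with Young's inequality replacing your degree-disjointness observation) and the trio $\exp(-z^{2}/m)$, $\tfrac{1}{m}\sin(mz)$, $\exp(-z^{2m})$ for the inequivalence, and does not record your Cauchy-estimate implication that compact convergence forces $\ell^{1}$-convergence; apart from these cosmetic differences the two proofs are parallel.
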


Section~\ref{sec:algebra} concerns the algebraic structure of~$\bounded$.
Taking~$n=1$ in~\eqref{eq:tails_determine_heads} shows that~$\bounded$ is invariant
under backward shift~$\sigma\p{(a_n)_{n\geq 0}} = (a_{n+1})_{n\geq 0}$.
Note that the backward shift plays a central role in the theory of power series
in the unit circle~\cite{cima2000backward, nikolski2019hardy}.
The following theorem collects the main findings of Section~\ref{sec:algebra}:

\begin{theorem} \label{thm:shift}
Consider the backward shift~$\sigma: \bounded \to \bounded$
and let~$\sigma^\infty (\bounded) = \bigcap_{k\geq 0} \sigma^k(\bounded)$.
Then for every~$0\leq k\leq \infty$ the set~$\sigma^k(\bounded)$ is an
$\ell^1$-dense subring of~$\bounded$ and the inclusions
\begin{equation} \label{eq:inclusions}
	\bounded \supset \sigma(\bounded) \supset \sigma^2(\bounded)
		\supset \cdots \supset \sigma^\infty (\bounded)
\end{equation}
are all strict. Moreover,
the elements of~$\sigma^k(\bounded)$ are the power series~$f(x)\in \bounded$ satisfying
\begin{equation} \label{eq:peano}
	f\p{\frac{1}{x}} = c_1 x + c_2 x^2 + \cdots + c_{k-1} x^{k-1} + \O (x^k) \as x\to 0
\end{equation}
for some~$c_1,\ldots,c_{k-1} \in \C$.
The restriction of~$\sigma$ to~$\sigma^\infty(\bounded)$ is one-to-one.
For every constant~$\lambda\neq 0$ the map~$\sigma-\lambda: \bounded\to\bounded$ is one-to-one.
\end{theorem}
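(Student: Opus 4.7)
I would tackle the several assertions of Theorem~\ref{thm:shift} in the following order: first the asymptotic characterization~\eqref{eq:peano}, then the ring structure of $\sigma^k(\bounded)$, then the strict inclusions and $\ell^1$-density, and finally the two injectivity statements. Every claim after the first is a short consequence of~\eqref{eq:peano}.

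For the characterization I would induct on $k$. The case $k=0$ is vacuous, since the condition reduces to $f(1/x)=O(1)$, automatic for $f\in\bounded$. For the inductive step, observe that $g\in\sigma^{k+1}(\bounded)$ is equivalent to the existence of a constant $a_0$ with $f(x)=a_0+xg(x)\in\sigma^k(\bounded)$. Substituting $y=1/x$ in $f(y)=a_0+yg(y)$ gives $g(1/x)=x f(1/x)-a_0 x$, so the inductive expansion of $f(1/x)$ lifts to an expansion of $g(1/x)$ that is one order longer, with new leading coefficient $-a_0$. The converse is analogous: given $g$ with the level-$(k{+}1)$ expansion, the choice $a_0=-c_1$ forces $f(y)=a_0+yg(y)$ to satisfy the level-$k$ expansion; this $f$ is entire, bounded on compacta by continuity, and bounded at infinity because $f(1/x)=O(1)$ as $x\to 0$, so the inductive hypothesis places it in $\sigma^k(\bounded)$. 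The ring structure then drops out: additive closure is immediate, and for multiplicative closure one multiplies expansions $g_i(1/x)=p_i(x)+O(x^k)$, with $p_i$ polynomials of degree $\leq k-1$ vanishing at $0$, to get $(g_1g_2)(1/x)=p_1(x)p_2(x)+O(x^k)$, and then truncates $p_1p_2$ at degree $k-1$ (the tail being absorbed into $O(x^k)$). The case $k=\infty$ follows by intersection.

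For $\ell^1$-density, since finite sequences are dense in $\ell^1$ it suffices to approximate a polynomial $p$ by an element of $\sigma^\infty(\bounded)$. I would take $g_{R,M}(x)=p(x)\exp(-(x/R)^{2M})$ with $2M>\deg p$, which lies in $\sigma^\infty(\bounded)$ because the exponential factor decays faster than any polynomial at infinity. Expanding the exponential and using that the shifted supports $\{2Mk,\ldots,2Mk+\deg p\}$ are pairwise disjoint for $k\geq 1$ yields $\norm{g_{R,M}-p}_{\ell^1}=\norm{p}_{\ell^1}\bigl(\exp(R^{-2M})-1\bigr)\to 0$ as $R\to\infty$. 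For the strict inclusions I would use $g_k(x)=(\sin x/x)^k$ for each $k\geq 0$ (with $g_0=1$): bounded by $1$ on $\R$, satisfying $g_k(1/x)=(x\sin(1/x))^k=O(x^k)$ so that $g_k\in\sigma^k(\bounded)$, yet $g_k(1/x)/x^k=(\sin(1/x))^k$ has no limit as $x\to 0$, placing $g_k\notin\sigma^{k+1}(\bounded)$ and so making every inclusion in~\eqref{eq:inclusions} strict.

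For the injectivity statements: if $\sigma(f)=\sigma(g)$ with $f,g\in\sigma^\infty(\bounded)$, then $f-g$ is a constant that also lies in $\sigma(\bounded)$; by the $k=1$ case of~\eqref{eq:peano} every element of $\sigma(\bounded)$ vanishes at infinity, forcing $f=g$. For $\sigma-\lambda$ with $\lambda\neq 0$, the equation $\sigma f=\lambda f$ gives the recurrence $a_{n+1}=\lambda a_n$ and hence $a_n=\lambda^n a_0$; the resulting series has radius of convergence $1/|\lambda|$, so convergence on all of $\R$ forces $a_0=0$. The main obstacle, I expect, lies in the bookkeeping for the induction establishing~\eqref{eq:peano}: tracking how the coefficients $c_i$ propagate under shift and closing both directions of the equivalence cleanly requires some care, but once~\eqref{eq:peano} is in hand every remaining claim is a short corollary.
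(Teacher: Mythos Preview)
Your argument is essentially correct and follows the paper's decomposition, with minor variations: you prove~\eqref{eq:peano} by induction where the paper does it in one step (writing $f\in\sigma^k(\bounded)$ as the existence of $a_0,\ldots,a_{k-1}$ with $a_0+a_1x+\cdots+a_{k-1}x^{k-1}+x^kf(x)=O(1)$ at infinity and substituting $x\mapsto 1/x$), and for the strict inclusions you use $(\sin x/x)^k$ where the paper uses $\sigma^k\sin x$, both choices working for the same underlying reason.

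There is, however, a genuine gap in the last two claims. In this paper ``one-to-one'' means \emph{bijective}: the paper proves that $\sigma$ restricted to $\sigma^\infty(\bounded)$ is a bijection of $\sigma^\infty(\bounded)$ onto itself, and that $\sigma-\lambda$ is a bijection of $\bounded$ onto itself for every $\lambda\neq0$. You establish only injectivity in both cases. Surjectivity of $\sigma$ on $\sigma^\infty(\bounded)$ is short once you have~\eqref{eq:peano}: given $f\in\sigma^\infty(\bounded)$ set $g(y)=yf(y)$; since $f(1/x)=O(x^{k+1})$ for every $k$, one gets $g(1/x)=(1/x)f(1/x)=O(x^k)$ for every $k$, so $g\in\sigma^\infty(\bounded)$ and $\sigma g=f$. (The paper instead extracts this from an abstract lemma using only $\ker\sigma^k=\ker\sigma$.) Surjectivity of $\sigma-\lambda$ is less automatic: the paper produces the explicit preimage
\[
f(z)=\frac{-\lambda^{-1}g(\lambda^{-1})+zg(z)}{1-\lambda z},
\]
verifies that the apparent pole at $z=\lambda^{-1}$ is removable, and checks that $f$ is bounded on $\R$ because $g$ is. This construction is missing from your proposal and should be supplied.
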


Roughly speaking, the backward shift~$\sigma$ on~$\bounded$ is almost invertible:
it is one-to-one on a dense subset and for every small perturbation~$\lambda\neq 0$
the map~$\sigma-\lambda$ is one-to-one on the whole set.

By definition, the elements of~$\sigma^k (\bounded)$ are exactly the bounded power series
that can be extended $k$~times to the left to a bounded power series.
Such extension is unique for the exception of the constant term,
which does not affect boundedness but does affect further extensibility.
The strict inclusions~\eqref{eq:inclusions} state that for every~$k\geq 0$ there
is a bounded power series that can be extended $k$~times but not~$k+1$ times.
Since~$\sigma$ does not preserve products,
the fact that the set~$\sigma^k (\bounded)$ is a subring is not trivial.

Equation~\eqref{eq:peano} relates left-extensibility of a power series
to the behavior at~$x=\infty$ of the corresponding function.
If~$\O(x^k)$ is replaced by~$o(x^k)$, then~\eqref{eq:peano} turns into
the notion of Peano differentiability~\cite{fischer2005peano}.
In particular, with the exception of~$f=0$,
a power series~$f(x)\in \sigma^\infty(\bounded)$
is infinitely Peano differentiable (but not analytic) at~$x=\infty$.


\section{Boundedness and Coefficients} \label{sec:convexity}
In this section we are interested in asymptotic properties of the coefficient
sequence that are entailed by the boundedness of the resulting function.
We begin by reviewing some elementary facts about convergent power series.

\begin{lemma} \label{lem:max_inf}
Let~$f(x)=\sum_{n\geq 0} a_n x^n$ be a power series convergent on the real line and
not a polynomial. For every fixed~$x\in \R$, let~$\argmax_{n\geq 0} \abs{a_nx^n}$
denote the minimum index realizing the maximum~$\max_{n\geq 0}\abs{a_n x^n}$.
Then:
\begin{equation} \label{eq:max_argmax}
	\lim_{\abs{x} \to\infty} \max_{n\geq 0} \abs{a_n x^n} = \infty, \quad
	\lim_{\abs{x}\to\infty} \argmax_{n\geq 0} \abs{a_nx^n} = \infty.
\end{equation}
\end{lemma}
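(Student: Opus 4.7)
The plan is to exploit the hypothesis that $f$ is not a polynomial: infinitely many coefficients $a_N$ are nonzero, and any such $a_N$ with $N$ large dominates all lower-degree terms as $|x|\to\infty$. Before applying this, I would observe that for each real $x$ the supremum $\max_{n\geq 0}|a_n x^n|$ is in fact attained: since $f$ converges on all of $\R$, the radius of convergence is infinite, so $\sum_n |a_n x^n|<\infty$, forcing $|a_n x^n|\to 0$ and hence the supremum to be a maximum; this also justifies the convention ``minimum index realizing the maximum''.

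For the first limit I would simply pick any $N\geq 1$ with $a_N\neq 0$. Then $|a_N x^N|\to\infty$ as $|x|\to\infty$, and the trivial bound $\max_{n\geq 0}|a_n x^n|\geq |a_N x^N|$ yields the claim. For the second limit I would fix an arbitrary $M\geq 0$ and choose some $N>M$ with $a_N\neq 0$, which is possible because $f$ is not a polynomial. For every $n\in\{0,1,\dots,M\}$ one has
\[
	\frac{|a_n x^n|}{|a_N x^N|} \;=\; \frac{|a_n|}{|a_N|}\,|x|^{n-N} \;\longrightarrow\; 0 \as |x|\to\infty,
\]
since $n-N<0$. Therefore for $|x|$ sufficiently large (depending on $M$) the term $|a_N x^N|$ strictly exceeds every $|a_n x^n|$ with $n\leq M$; in particular no index $\leq M$ can realize the overall maximum, so the minimum index doing so must be $>M$. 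Since $M$ was arbitrary, the argmax diverges to $\infty$.

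I do not foresee any real obstacle here: both statements reduce to an elementary polynomial-vs-higher-degree comparison, and the only care needed is to combine ``not a polynomial'' (which guarantees arbitrarily high-degree nonzero coefficients) with the observation that the maximum is attained. No use of the full structure of boundedness on $\R$ is required; convergence everywhere is enough.
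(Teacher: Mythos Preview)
Your proof is correct and follows essentially the same approach as the paper: both arguments note that convergence forces $|a_n x^n|\to 0$ in $n$ (so the maximum is attained), then use a single nonzero coefficient $a_N$ with $N\geq 1$ to push the maximum to infinity, and compare $|a_n x^n|/|a_N x^N|\to 0$ for $n<N$ to force the argmax past any fixed threshold. Your write-up is a bit more explicit than the paper's compressed version, but the underlying idea is identical.
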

\begin{proof}
Convergence implies that for every fixed~$x\in\R$
the sequence~$(\abs{a_n x^n})_{n\geq 0}$ has a maximum.
Moreover, for every fixed~$n<m$ such that~$a_n,a_m\neq 0$ we have
\[
	\lim_{\abs{x}\to\infty} \abs{a_n x^n} = \infty, \quad
	\lim_{\abs{x}\to\infty} \abs{\frac{a_n x^n}{a_m x^m}} = 0.
\]
Since there are infinitely many
non-zero coefficients, the statement follows.
\end{proof}

Let~$f(x)=\sum_{n\geq 0} a_n x^n$ be
a power series with non-zero coefficients and convergent on the real line.
Then the sequence~$(\abs{a_n a_{n+1}^{-1}})_{n\geq 0}$ converges to~$+\infty$.
If~\eqref{eq:convexity} holds for some constants~$N\geq 1$ and~$0<\chi<1$,
then the sequence~$(\abs{a_n a_{n+1}^{-1}})_{n\geq 0}$ is strictly monotone
and, furthermore, for every~$m\geq N$
\begin{equation} \label{eq:interval_max}
	\abs{\frac{a_{m-1}}{a_{m}}} < x < \abs{\frac{a_{m}}{a_{m+1}}} \implies
		\abs{a_m x^m} = \max_{n\geq N} \abs{a_n x^n}.
\end{equation}
In~\eqref{eq:interval_max} the maximizer is unique; this represents the generic case.
If~$x=\abs{a_{m-1} a_{m}^{-1}}$ for some~$m\geq N$ there are two maximizers.
As a particular case of~\eqref{eq:interval_max},
in Lemma~\ref{lem:general_inequality} we will take~$x$
equal to the geometric mean of the endpoints.

\begin{lemma} \label{lem:general_inequality}
Let~$f(x)=\sum_{n\geq 0} a_n x^n$ be a power series convergent on the real line
and satisfying~\eqref{eq:convexity} for some constants~$N\geq 1$ and~$0<\chi<1$.
Fix~$m\geq N$ and let~$x=\abs{a_{m-1}a_{m+1}^{-1}}^{\frac{1}{2}}$. Then for every~$p\geq q\geq 0$
the following inequalities hold:
\begin{equation} \label{eq:general_inequality}
	\abs{\frac{a_{m+p} x^{m+p}}{a_{m+q} x^{m+q}}} \leq \chi^{\frac{(p-q)^2}{2}},
	\quad
	\abs{\frac{a_{m-p} x^{m-p}}{a_{m-q} x^{m-q}}} \leq \chi^{\frac{(p-q)^2}{2}}.
\end{equation}
\end{lemma}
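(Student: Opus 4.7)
My approach is to introduce the ratio $r_n = \abs{a_{n+1}/a_n}$, so that the Tur\'an hypothesis~\eqref{eq:convexity} becomes the multiplicative recursion $r_n \leq \chi r_{n-1}$ valid for all $n \geq N$. The pivot of the argument is that the choice $x = \abs{a_{m-1}/a_{m+1}}^{1/2}$ is tailored to balance the inequality exactly at index $m$: direct substitution yields
\[
    r_m x = \abs{a_{m-1}a_{m+1}}^{1/2}/\abs{a_m},
    \qquad
    r_{m-1} x = \abs{a_m}/\abs{a_{m-1}a_{m+1}}^{1/2},
\]
and applying~\eqref{eq:convexity} at $n = m$ gives the two pivot bounds $r_m x \leq \chi^{1/2}$ and $r_{m-1} x \geq \chi^{-1/2}$.

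Iterating $r_n \leq \chi r_{n-1}$ forward from $m$ yields $r_{m+j}\,x \leq \chi^{j+1/2}$ for all $j \geq 0$; iterating it backward from $m-1$ yields $r_{m-k}\,x \geq \chi^{-(k-1/2)}$ for all $k \geq 1$ (as long as the indices used stay $\geq N$). Both ratios in~\eqref{eq:general_inequality} are telescoping products in the $r_{m \pm \cdot}\,x$,
\[
    \abs{\frac{a_{m+p}x^{m+p}}{a_{m+q}x^{m+q}}} = \prod_{j=q}^{p-1} r_{m+j}\,x,
    \qquad
    \abs{\frac{a_{m-p}x^{m-p}}{a_{m-q}x^{m-q}}} = \prod_{k=q+1}^{p} (r_{m-k}\,x)^{-1},
\]
and in both cases inserting the bounds above gives the upper estimate $\chi^{\sum_{j=q}^{p-1}(j+1/2)} = \chi^{(p-q)(p+q)/2}$. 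Because $q \geq 0$ forces $p+q \geq p-q$ and $\chi < 1$, this is in turn at most $\chi^{(p-q)^2/2}$, completing the proof.

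I expect no conceptual obstacle: once the pivot is in place, the proof is careful bookkeeping with telescoping products. The one delicate point is tracking when Tur\'an may be invoked: the forward iteration uses it at $n = m+1,\ldots,m+p$, always admissible since $m \geq N$, while the backward iteration uses it at $n = m-1,\ldots,m-p+1$ and so implicitly requires $p \leq m - N + 1$. The real content of the lemma is the symmetrization: choosing $x$ to be the geometric mean of the two neighbouring ratios $\abs{a_{m-1}/a_m}$ and $\abs{a_m/a_{m+1}}$ makes the two pivot bounds reciprocal to one another, which is what produces an accumulated exponent quadratic in $p - q$ and, in turn, will allow the tails of $f$ to be controlled in Theorem~\ref{thm:convexity} by a convergent Gaussian-type series in $\chi$.
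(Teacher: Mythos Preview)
Your proof is correct and follows essentially the same approach as the paper's: both iterate the Tur\'an inequality to bound the telescoping product $\prod |a_{m+k+1}/a_{m+k}|$, then use Tur\'an once more at $n=m$ together with the choice of $x$ to obtain the exponent $(p^2-q^2)/2$, which is finally weakened to $(p-q)^2/2$. The only difference is organizational---you introduce the ratios $r_n$ and establish the pivot bounds $r_m x\le\chi^{1/2}$, $r_{m-1}x\ge\chi^{-1/2}$ before telescoping, whereas the paper telescopes first and substitutes $x$ at the end---but the underlying computation is identical.
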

\begin{proof}
Let us prove the first inequality in~\eqref{eq:general_inequality}.
Multiple applications~\eqref{eq:convexity} give
\[
	\abs{\frac{a_{m+p}}{a_{m+q}}}
	= \prod_{k=q}^{p-1} \abs{\frac{a_{m+k+1}}{a_{m+k}}}
	\leq \prod_{k=q}^{p-1} \p{ \chi^k \abs{\frac{a_{m+1}}{a_{m}}} }
	= \chi^{\frac{(p-q)(p+q-1)}{2}} \abs{\frac{a_{m+1}}{a_{m}}}^{p-q}.
\]
Multiplying both sides by~$x^{p-q} = \abs{a_{m-1}a_{m+1}^{-1}}^{\frac{(p-q)}{2}}$ gives
\[
	\abs{\frac{a_{m+p} x^{m+p}}{a_{m+q} x^{m+q}}}
	\leq
	\chi^{\frac{(p-q)(p+q-1)}{2}} \abs{\frac{a_{m+1} a_{m-1}}{a_{m}^2}}^{\frac{p-q}{2}}
	\leq \chi^{\frac{p^2-q^2}{2}} \leq \chi^{\frac{(p-q)^2}{2}}.
\]
The second inequality in~\eqref{eq:general_inequality}
can be obtained in a similar way.
\end{proof}

\begin{remark} \label{rem:Legendre}
The result of Lemma~\ref{lem:general_inequality} can be interpreted as
a discrete Taylor-like inequality.
Letting~$g(n)=-\log(\abs{a_n})$ and taking~$q=0$ in~\eqref{eq:general_inequality} gives
\[
	g(m+p) \leq g(m) + \p{\frac{g(m+1)-g(m-1)}{2}} p + \log(\chi^{-1}) \frac{p^2}{2}.
\]
Define the discrete derivative of a sequence~$g$ as~$(\Delta g)(n) = g(n+1)-g(n)$.
Then the quantity~$\frac{1}{2}(g(m+1)-g(m-1))$ is the average of~$\Delta g(m)$
and~$\Delta g(m-1)$, while~$\log(\chi^{-1})$
is bounded by~$\Delta^2 g(n-1)$. Let~$x=\exp(s)$.
Then~$\abs{a_n x^n} = \exp(ns-g(n))$ and the maximum in~\eqref{eq:interval_max}
can be interpreted as a discrete Legendre transform.
\end{remark}

\begin{proof} [Proof of Theorem~\ref{thm:convexity}]
Let~$f(x)=\sum_{n\geq 0} a_n x^n$ be a real power series,
convergent on the real line, and satisfying~\eqref{eq:convexity} for
some constants~$N\geq 1$ and~$0<\chi< 1$.

We will prove that if~$\lim_{n\to\infty} L(n) \leq \vartheta(\chi)$
or if~$\lim_{n\to\infty} L(n) > 2\psi(\chi)$,
then the function~$f:\R\to\R$ is unbounded on the real line. We will do so by
exhibiting a sequence~$(x_m)_{m\geq N}$ and a constant~$C>0$
such that~$\lim_{m\to\infty} x_m=+\infty$ and
\begin{equation} \label{eq:convexity_goal}
	\abs{f(x_m)} \geq C \max_{n\geq N} \abs{a_n x_m^n}
\end{equation}
for every~$m$ sufficiently large. By Lemma~\ref{lem:max_inf}, this implies that~$f$
is unbounded. Note that for every fixed~$M\geq 0$ we have
\begin{equation} \label{eq:convexity_reduction}
	\lim_{x\to +\infty}
		\frac{\sum_{0\leq k\leq M} \abs{a_k x^k}}{\max_{n\geq 0} \abs{a_n x^n}}
		= 0.
\end{equation}
Therefore, in obtaining~\eqref{eq:convexity_goal} we can always discard a finite number of terms.
In particular, without loss of generality, we can assume~$N=0$ in~\eqref{eq:convexity}.

Define
\[
	\Theta = \sum_{1\leq k\leq \vartheta(\chi)} \chi^{\frac{k^2}{2}},
	\quad
	\Psi = \sum_{k>\psi(\chi)} \chi^{\frac{k^2}{2}}.
\]
By definition of~$\vartheta(\chi)$ and~$\psi(\chi)$ it follows that~$\Theta<1$
and~$\Psi<\frac{1}{2}$ respectively.

Suppose that~$\lim_{n\to\infty} L(n) \leq \vartheta(\chi)$,
that is, $L(n)\leq \vartheta(\chi)$ for every~$n$ sufficiently large.
In light of~\eqref{eq:convexity_reduction}, without loss of generality
we can assume~$L(n)\leq \vartheta(\chi)$ for every~$n\geq 0$, that is,
every subsequence of consecutive coefficients with the same sign
has length at most~$\vartheta(\chi)$.
If coefficients are eventually alternating signs, that is, $a_n a_{n-1}<0$ for every~$n$ sufficiently
large, then the coefficients of~$f(-x)$ have eventually all the same sign, thus~$f(-x)$
is unbounded, and therefore~$f(x)$ is unbounded.
Otherwise, there are infinitely many~$m\geq 0$ such that~$a_m$ and~$a_{m+1}$ have the same sign and,
without loss of generality, we assume them positive.
Fix such~$m$ and let~$x_m = (a_{m-1}a_{m+1}^{-1})^{\frac{1}{2}}$.
Let~$p,n\geq 1$ be any integers
such that~$a_{m+p}>0$ and~$a_{m+p+1},\ldots,a_{m+p+n}<0$.
By hypothesis~$n\leq \vartheta(\chi)$. From the first inequality of~\eqref{eq:general_inequality}
we obtain
\[
	\abs{\sum_{k=1}^{n} a_{m+p+k} x_{m}^{m+p+k}}
	\leq \p{\sum_{1\leq k\leq n} \chi^{\frac{k^2}{2}}}
	\abs{a_{m+p} x_m^{m+p}}
	\leq \Theta \abs{a_{m+p} x_m^{m+p}}.
\]
It follows that every sequence of consecutive negative terms appearing after the index~$m+1$
is dominated by the positive term preceding it. Therefore
\[
	\sum_{n\geq m+1} a_n x_m^n > \p{1-\Theta} a_{m+1} x_m^{m+1} > 0.
\]
Similarly, from the second inequality of~\eqref{eq:general_inequality} we
obtain~$\sum_{1\leq n\leq m} a_n x_m^n > \p{1-\Theta} a_{m} x_m^{m}$.
We conclude that
\[
	f(x_m) = \sum_{n\geq 0} a_n x^n >
	\p{1-\Theta} a_{m} x_m^{m} = \p{1-\Theta} \max_{n\geq 0} \abs{a_{n} x_m^{n}}.
\]
Since~$\lim_{m\to\infty} x_m = +\infty$
and~$\Theta<1$,
by Lemma~\ref{lem:max_inf} it follows that~$f$ is unbounded.

Now suppose that~$\lim_{n\to\infty} L(n) > 2\vartheta(\chi)$,
that is, $L(n)\geq 2\vartheta(\chi)+1$ eventually.
Then there are infinitely many~$m\geq 0$ such
that the coefficients~$a_{m-\vartheta(\chi)},\ldots,a_{m+\vartheta(\chi)}$ have the
same sign and, without loss of generality, we can suppose them positive.
Fix such~$m$ and let~$x_m = (a_{m-1}a_{m+1}^{-1})^{\frac{1}{2}}$.
From the first inequality of~\eqref{eq:general_inequality} we obtain
\[
	\abs{\sum_{k>m+\vartheta(\chi)} a_{k} x_m^{k}}
		\leq
		\abs{\sum_{k>\vartheta(\chi)} \chi^{\frac{k^2}{2}}} \abs{a_{m} x_m^{m}}
		= \Psi \abs{a_{m} x_m^{m}}.
\]
Similarly, from the second inequality of~\eqref{eq:general_inequality}
we obtain~$\abs{\sum_{1\leq k < m-\vartheta(\chi)} a_{k} x_m^{k}} \leq \Psi \abs{a_{m} x_m^{m}}$.
We conclude that
\[
	f(x_m) \geq (1-2\Psi) \abs{a_{m} x_m^{m}} = (1-2\Psi) \max_{n\geq 0} \abs{a_{n} x_m^{n}}.
\]
Since~$\lim_{m\to\infty} x_m = +\infty$
and~$\Psi<\frac{1}{2}$,
by Lemma~\ref{lem:max_inf} it follows that~$f$ is unbounded.
\end{proof}

We conclude this session with some open problems.
As discussed in the introduction, bounded power series on the real line cannot have
arbitrary convexity.

\begin{definition}
Let~$\tilde \chi$ denote the supremum of the constants~$\chi$ such that
if a complex power series satisfies~\eqref{eq:convexity} for some~$N\geq 1$, then it is unbounded
on the real line.
Let~$\tilde \chi_\R$ denote the supremum of the constants~$\chi$ such that
if a real power series satisfies~\eqref{eq:convexity} for some~$N\geq 1$, then it is unbounded
on the real line.
\end{definition}

Clearly~$\tilde \chi \leq \tilde \chi_\R$.
The following example shows that~$\tilde\chi_\R \leq 1$, and thus~$\tilde\chi\leq 1$.

\begin{example}
The coefficients of the power series associated to~$\sin(x)+\cos(x)$
satisfy~$\abs{a_n}=\frac{1}{n!}$, thus~\eqref{eq:convexity}
with~$N=1$ and~$\chi=1$.
\end{example}

Corollary~\ref{cor:convexity} in the introduction
gives~$\tilde\chi_\R\geq \frac{1}{2}$.
Its proof relies on
the inequalities~$\vartheta(\chi)\geq 2$ and~$\psi(\chi)\leq 1$, which can be
seen numerically to hold for~$\chi \leq 0.67522$.
The argument of the second part of the proof
of Theorem~\ref{thm:convexity} generalizes without modifications to the case of complex
coefficients, showing that if~$\psi(\chi)=0$ then~$f$ is unbounded on the real line.
We see that~$\psi(\chi)=0$ holds for~$\chi \leq 0.207875$.
We are unable to close the gaps~$0.67522 \leq \tilde \chi_\R \leq 1$
and~$0.207875 \leq \tilde\chi\leq 1$.

\begin{conjecture} \label{conj:equal}
The constants~$\tilde \chi$ and~$\tilde \chi_\R$ are equal to each other.
\end{conjecture}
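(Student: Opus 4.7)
Only the inequality $\tilde\chi \geq \tilde\chi_\R$ requires proof, since the reverse is immediate from the definitions. Fix $\chi<\tilde\chi_\R$ and suppose, for contradiction, that there exists a complex power series $f(x)=\sum_{n\geq 0} a_n x^n$ bounded on $\R$ and satisfying~\eqref{eq:convexity} with this $\chi$. The overall plan is to show that phase freedom in the coefficients cannot produce more cancellation on $\R$ than sign freedom in the real case; this would yield the desired contradiction, either via Theorem~\ref{thm:convexity} applied to a suitable real companion of $f$, or via a direct complex analog of that theorem.

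My first attempt would be to mimic the proof of Theorem~\ref{thm:convexity} while tracking the arguments $\theta_n = \arg(a_n)$. At the test points $x_m = \abs{a_{m-1} a_{m+1}^{-1}}^{1/2}$, Lemma~\ref{lem:general_inequality} forces the magnitudes $\abs{a_n x_m^n}$ to decay like $\chi^{(n-m)^2/2}$ away from $n=m$, so the bound $\abs{f(x_m)}\geq c\,\abs{a_m x_m^m}$ can fail only if the window $\abs{n-m}\leq \vartheta(\chi)$ contains a finely tuned phase configuration that cancels the dominant term. I would then aim at a complex analog of Corollary~\ref{cor:signs}: call a \emph{phase run} a maximal block of consecutive indices whose arguments $\theta_n$ lie in a common arc of width less than, say, $\pi/3$, and try to show that a bounded power series satisfying~\eqref{eq:convexity} cannot contain arbitrarily long phase runs. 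Combined with a pigeonhole over the choice of arc and the parity of $x_m$, this would yield infinitely many $m$ at which a long phase run aligns with the peak at $n=m$, giving $\abs{f(x_m)}\to\infty$ and contradicting boundedness.

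The main obstacle, and the reason the conjecture remains open, is that phases form a continuum rather than a discrete set, so the combinatorics controlling cancellation is substantially richer than in the real case. A tempting secondary route is the decomposition $f = u+iv$ with real series $u(x) = \sum \mathrm{Re}(a_n) x^n$ and $v(x) = \sum \mathrm{Im}(a_n) x^n$, both bounded on $\R$; but the hypothesis~\eqref{eq:convexity} on $\abs{a_n}$ does not descend to $\abs{\mathrm{Re}(a_n)}$ or $\abs{\mathrm{Im}(a_n)}$, since these can vanish or oscillate wildly while $\abs{a_n}$ stays log-concave, so no naive projection argument seems to work. A more promising route might be a variational one: among all phase sequences $(\theta_n)$ with $\abs{a_n}$ prescribed, characterize those minimizing $\sup_{x\in\R} \abs{\sum a_n e^{i\theta_n} x^n}$, and show that the infimum, up to a constant factor, is already attained by a real sign choice $\theta_n \in \{0,\pi\}$. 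I expect the heart of the matter to be exactly this extremal statement, whose resolution likely demands phase-cancellation estimates of uncertainty-principle type that go beyond the magnitude-only toolbox developed in the present paper.
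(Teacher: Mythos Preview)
This statement is a \emph{conjecture} in the paper, not a theorem; the paper offers no proof and explicitly writes ``We are unable to close the gaps $0.67522 \leq \tilde\chi_\R \leq 1$ and $0.207875 \leq \tilde\chi \leq 1$.'' Your write-up is likewise not a proof, and you say so: you sketch several plausible strategies (a complex phase-run analogue of Theorem~\ref{thm:convexity}, the decomposition $f=u+iv$, and a variational extremal problem over phase sequences) and then correctly identify the obstruction in each. In particular, your observation that the log-concavity hypothesis on $\abs{a_n}$ does not descend to $\abs{\mathrm{Re}(a_n)}$ or $\abs{\mathrm{Im}(a_n)}$ is exactly what blocks the naive reduction to the real case, and your remark that phases form a continuum is the genuine reason the combinatorial part of the argument in Section~\ref{sec:convexity} does not transfer.

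There is therefore nothing to compare against: neither the paper nor your proposal contains a proof, and you have been candid about that. The conjecture remains open.
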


\begin{conjecture} \label{conj:one}
The constants~$\tilde \chi$ and~$\tilde \chi_\R$ are equal to~$1$.
\end{conjecture}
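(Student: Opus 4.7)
The plan is to prove the stronger Conjecture~\ref{conj:one} (which immediately entails Conjecture~\ref{conj:equal}) by showing that the Tur\'an hypothesis~\eqref{eq:convexity} with any $\chi<1$ already forces $f$ to be a transcendental entire function of order zero, and then invoking Wiman's classical $\cos\pi\rho$ theorem to produce a sequence of positive reals along which $\abs{f}$ tends to infinity.

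First, I would iterate~\eqref{eq:convexity} on the ratios $r_n=\abs{a_{n+1}/a_n}$: since $a_{n-1}\neq 0$ is part of the hypothesis for $n\geq N$, the Tur\'an inequality reads $r_n\leq \chi\,r_{n-1}$, which telescopes into $r_n\leq \chi^{n-N+1} r_{N-1}$. Summing logarithms gives
\[
	\log\abs{a_n}^{-1} \geq \frac{n^2}{2}\log\chi^{-1} - O(n).
\]
In particular $\abs{a_n}^{1/n}\to 0$, so $f$ extends to an entire function on~$\C$. The standard coefficient formula for the order of an entire function then yields
\[
	\rho_f = \limsup_{n\to\infty} \frac{n\log n}{\log\abs{a_n}^{-1}} = 0.
\]
Because~\eqref{eq:convexity} also forces $a_n\neq 0$ for all $n\geq N-1$, the function $f$ is transcendental, not a polynomial.

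Second, I would invoke Wiman's $\cos\pi\rho$ theorem: writing $M(r,f)=\max_{\abs{z}=r}\abs{f(z)}$ and $m(r,f)=\min_{\abs{z}=r}\abs{f(z)}$, any transcendental entire function of order $\rho<1/2$ admits a sequence $r_k\to\infty$ along which $m(r_k,f)\geq M(r_k,f)^{\cos\pi\rho - o(1)}$. For $\rho_f=0$ this gives $m(r_k,f)\geq M(r_k,f)^{1-o(1)}\to\infty$, since $M(r,f)\to\infty$ for every transcendental $f$. Evaluating at $z=r_k>0$ gives $\abs{f(r_k)}\to\infty$ along a sequence on the positive real axis, contradicting boundedness on $\R$. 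Combined with the upper bound $\tilde\chi\leq\tilde\chi_\R\leq 1$ furnished by $\sin(x)+\cos(x)$, this yields $\tilde\chi=\tilde\chi_\R=1$ simultaneously.

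The main obstacle is to confirm that no subtlety has been overlooked that explains why the statement is posed as a conjecture. Wiman's theorem is classical and its only hypotheses (transcendental entire, finite order below $1/2$) are verified cleanly above, so the plan should go through verbatim; the one thing it does not provide is a proof intrinsic to the coefficient framework of Section~\ref{sec:convexity}. Recovering $\tilde\chi=1$ by sharpening the discrete Legendre estimates of Lemma~\ref{lem:general_inequality}, i.e.\ pushing the bounds $\vartheta(\chi)$ and $\psi(\chi)$ all the way to the extremal regime $\chi\to 1^-$, looks considerably harder and is presumably the reason the author left the statement as a conjecture.
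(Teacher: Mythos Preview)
The paper does not prove this statement: it is posed as an open conjecture, and the author explicitly writes that they are ``unable to close the gaps $0.67522 \leq \tilde\chi_\R \leq 1$ and $0.207875 \leq \tilde\chi \leq 1$.'' Your argument, by contrast, appears to be a complete and correct proof, so there is nothing in the paper to compare it against.

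The steps all check out. Iterating~\eqref{eq:convexity} with $\chi<1$ gives $\abs{a_{n+1}/a_n}\leq \chi\,\abs{a_n/a_{n-1}}$ for $n\geq N$, hence $\log\abs{a_n}^{-1}\geq \tfrac{1}{2}(\log\chi^{-1})\,n^2 - O(n)$, and the standard coefficient formula for the order yields $\rho_f=0$. The non-vanishing clause in~\eqref{eq:convexity} forces $a_n\neq 0$ for all $n\geq N-1$, so $f$ is transcendental. Wiman's minimum-modulus theorem for entire functions of order below $1/2$ then produces $r_k\to\infty$ with $\min_{\abs{z}=r_k}\abs{f(z)}\to\infty$; evaluating at the real point $z=r_k$ gives $\abs{f(r_k)}\to\infty$. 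This handles real and complex coefficients simultaneously and, together with the example $\sin(x)+\cos(x)$ already in the paper, yields $\tilde\chi=\tilde\chi_\R=1$, settling both Conjecture~\ref{conj:equal} and Conjecture~\ref{conj:one} at once.

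Your own closing remark diagnoses the situation accurately: Section~\ref{sec:convexity} works entirely within an elementary coefficient framework (discrete Legendre transform, sign-pattern bookkeeping), and from inside that framework the gap between $\vartheta(\chi)$ and $2\psi(\chi)$ does not close as $\chi\to 1^-$. Importing the classical $\cos\pi\rho$ theorem sidesteps those estimates completely. The trade-off is a black-box dependence on a nontrivial complex-analytic result, but I see no gap in the argument.
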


The techniques developed in this section might help deducing asymptotic inequalities
of combinatorial sequences from the boundedness of their generating function.
For example, let~$\exp(1-\exp(x))=\sum_{n\geq 0} \frac{b_n}{n!} x^n$.
The quantity~$b_n$, known as
\emph{complementary Bell number} or
\emph{Uppuri-Carpenter number}~\cite{uppuluri1969numbers},
is equal to the difference between the number of partitions of~$\{1,\ldots,n\}$
with an even number of blocks and those with an odd number of blocks.
We make the following conjecture:

\begin{conjecture} \label{conj:combinatorics}
The coefficient sequence of the power series~$\exp(1-\exp(x))$
contains arbitrarily long subsequences of consecutive coefficients with the same sign.
\end{conjecture}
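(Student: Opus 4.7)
The plan is to analyze $b_n$ via the Dobinski-type identity
\[
b_n \;=\; e \sum_{k\geq 1} \frac{(-1)^k\, k^n}{k!},
\]
obtained by expanding $\exp(1-\exp(x)) = e \cdot \exp(-e^x)$ as a double series and reading off the coefficient of $x^n/n!$. Writing $h_k(n) = k^n/k!$, the task reduces to showing that the alternating sum $\sum_k (-1)^k h_k(n)$ has sign that remains constant on arbitrarily long intervals of~$n$.

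First I would study the integer $\kappa(n) = \argmax_{k\geq 1} h_k(n)$. Analysis of $\log h_k(n) = n\log k - \log k!$ shows that $\kappa(n)$ is a non-decreasing step function with $\kappa(n) \sim n/\log n$, and with the jump $\kappa(n+1) = \kappa(n)+1$ occurring at the smallest~$n$ where $(1+1/\kappa)^n$ exceeds $\kappa+1$. The gaps between consecutive jumps grow like $\log \kappa(n)$, so $\kappa$ is eventually constant on arbitrarily long runs of consecutive~$n$.

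The central step is to show that, for all sufficiently large $n$, $\mathrm{sign}(b_n) = (-1)^{\kappa(n)}$; the long same-sign runs then follow immediately from the previous paragraph. Because $h_k(n)$ is not sharply peaked in~$k$ — its discrete second log-derivative at the maximum is only of order $(\log n)^2/n$ — the single-term dominance argument of Lemma~\ref{lem:general_inequality} cannot be applied directly. Instead I would turn to saddle-point analysis of
\[
b_n \;=\; \frac{n!}{2\pi i} \oint \frac{\exp(1-\exp(z))}{z^{n+1}}\,dz,
\]
whose phase $-e^z - (n+1)\log z$ has a conjugate pair of complex saddles satisfying $z\, e^z = -(n+1)$. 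Steepest descent through them produces an asymptotic of the form $b_n \approx A(n)\cos(\phi(n))$ with positive amplitude $A(n)$ and a phase $\phi(n)$ whose integer part $\lfloor \phi(n)/\pi\rfloor$ should track~$\kappa(n)$.

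The main obstacle is precisely this sign-determination step: the alternating cancellation is delicate, and the saddle-point error terms must be controlled below $|b_n|$ itself. An effective lower bound of that form would in particular settle the long-standing Wilf conjecture that $b_n \neq 0$ for $n>2$~\cite{de2016wilf, amdeberhan2013complementary}, so Conjecture~\ref{conj:combinatorics} should be at least as hard. A reasonable intermediate target is therefore the \emph{conditional} statement that, assuming an effective lower bound on $|b_n|$, the sign of $b_n$ agrees with $(-1)^{\kappa(n)}$ for all large~$n$; this reduces the conjecture to standard asymptotics for~$\kappa(n)$.
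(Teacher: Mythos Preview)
The statement is labelled a \emph{conjecture} in the paper and no proof is offered there; only Remark~\ref{rem:combinatorics} discusses consequences conditional on it and on Wilf's conjecture. There is therefore no paper-proof to compare your proposal against.

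Your proposal is a strategy sketch rather than a proof, and you correctly identify the gap yourself: the assertion that $\mathrm{sign}(b_n)=(-1)^{\kappa(n)}$ for all large~$n$ is the entire difficulty. Establishing it would in particular give $b_n\neq 0$ for large~$n$, which is essentially Wilf's conjecture, so this route cannot be expected to close without a substantial new idea. The preliminary ingredients---the Dobinski-type series, the estimate $\kappa(n)\sim n/\log n$, and the fact that $\kappa$ stays constant on runs of length $\sim\log\kappa(n)$---are correct and standard, but they do not by themselves constrain the sign of the alternating sum $\sum_k(-1)^k k^n/k!$. The saddle-point heuristic $b_n\approx A(n)\cos(\phi(n))$ is likewise reasonable, yet converting it into a sign statement requires exactly the effective error bound you flag as missing. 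In short, the outline locates the central obstruction accurately but does not remove it; it is not a proof of the conjecture, and the paper does not claim one either.
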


\begin{remark} \label{rem:combinatorics}
Wilf's Conjecture states that~$b_n\neq 0$ for~$n>2$~\cite{de2016wilf, amdeberhan2013complementary}.
Since the function~$\exp(1-\exp(x))$ is bounded on the real line,
if both Wilf's Conjecture and Conjecture~\ref{conj:combinatorics} are true, then
by Corollary~\ref{cor:signs} we have
\[
	\limsup_{n\to\infty} \abs{\frac{n b_{n+1} b_{n-1}}{(n+1)b_n^2}} \geq 1.
\]
In contrast, note that the
inequality~$\liminf_{n\to\infty} \abs{\frac{n b_{n+1} b_{n-1}}{(n+1)b_n^2}} \leq 1$
follows from the elementary fact that the power series is convergent on the real line.
\end{remark}


\section{The Topology of Bounded Power Series} \label{sec:topology}

Let~$\bounded$ denote the set of complex power series bounded on the real line.
This section investigates topological aspects of~$\bounded$.
Note that if a power series~$f(x)$ converges for every~$x \in \R$, then
it converges for every~$z\in \C$. Therefore, there is a natural bijection
between the set~$\bounded$ and the set of entire functions
that bounded on the real line. On the other hand, there is a natural bijection
between the set~$\bounded$ and the set of complex sequences~$(a_n)_{n\geq 0}$ satisfying
\[
	a_n = - \sum_{k\geq 1} a_{n+k} x^k + \O\p{\frac{1}{x}}, \as \abs{x}\to\infty
\]
for every~$n\geq 1$ or, equivalently, for~$n=1$.
The set~$\bounded$ has the cardinality of the continuum: to see this, note
that~$\abs{\C}\leq \abs{\bounded} \leq \abs{\C^\N}$ and~$\abs{\C}=\abs{\C^\N}=\abs{\R}$.

There are three natural notions of convergence on~$\bounded$: first,
$\ell^1$-convergence of coefficient sequences; second,
uniform convergence of bounded functions on~$\R$; third,
uniform convergence of entire functions on compact subsets of~$\C$.
Theorem~\ref{thm:topology} states that these topologies are inequivalent and incomplete,
and describes their topological completions.

\begin{proof} [Proof of Theorem~\ref{thm:topology}]
First, let us prove that the set~$\bounded$ endowed with~$\norm{\cdot}_{\ell^1}$
is (isometric to) a dense subset of~$\ell^1(\N,\C)$.
Let~$f\in \bounded$. Note that~$\norm{f}_{\ell^1}$ is well-defined: since~$f(x)$
converges for every~$x\in \R$, then it converges absolutely for~$x=1$.
Identify power series with their coefficient sequences.
Since the sequences with exactly one non-zero coefficient form a dense subset of~$\ell^1(\N,\C)$,
it is enough to prove that every monomial~$z^m$ can be $\ell^1$-approximated
by an entire function bounded on the real line.
Fix~$\lambda>0$ and consider~$z^m \exp(-\lambda z^{2})$.
Since product of functions corresponds to convolution
of coefficient sequences, Young's convolution inequality~\cite{quek1983sharpness} gives
\[
	\norm{z^m - z^m \exp(-\lambda z^2)}_{\ell^1} \leq
	\norm{z^m}_{\ell^1} \norm{1-\exp(-\lambda z^{2})}_{\ell^1}
	= \exp(\lambda) - 1.
\]
As~$\lambda \to 0$ the function~$z^m \exp(-\lambda z^{2})$ converges to~$z^m$
in the $\ell^1$-norm.
Since the function~$z^m \exp(-\lambda z^2)$ is bounded on the real line,
it follows that~$\bounded$ is dense in~$\ell^1(\N,\C)$.

Density of~$\bounded$ in~$C^\mathrm{b}(\R,\C)$ with respect to the uniform norm on~$\R$
is an immediate consequence of
Carleman's Approximation Theorem~\cite{carleman1928theoreme, kaplan1955approximation},
which states that every continuous function~$\R\to\C$ can be uniformly approximated
on~$\R$ by an entire function.

Let us now prove that~$\bounded$ endowed with uniform convergence on compact subsets of~$\C$
is dense in the set of entire functions.
Since entire functions can be uniformly approximated
by polynomial on compact sets, it is enough to show that every monomial~$z^m$
can be uniformly approximated on compact sets by an entire function which is bounded on the real
line. Fix~$r>0$ and note that for every~$\lambda>0$
\[
	\sup_{\abs{z}\leq r} \abs{z^m - z^m \exp(-\lambda z^2)} \leq r^m \p{\exp(\lambda r^2)-1}.
\]
The right hand side converges to~$0$ as~$\lambda \to 0$. This proves that~$\bounded$
is dense in the set of entire functions.

It remains to show that the three topologies are inequivalent.
For every~$m\geq 1$ the functions~$0$, $1$, $\exp(-\frac{z^2}{m})$, $\frac{1}{m}\sin(mz)$,
and~$\exp(-z^{2m})$ are entire and bounded on the real line.
As~$m\to\infty$ the following facts hold: the function~$\exp(-\frac{z^2}{m})$
converges uniformly to~$1$ on compact subsets of~$\C$ but does not converge uniformly on~$\R$;
the function~$\frac{1}{m}\sin(mx)$ converges uniformly to~$0$ on~$\R$ while
$
	\norm{\frac{1}{m}\sin(mx)}_{\ell^1} \geq 1
$
for every~$m$;
the function~$\exp(-z^{2m})$ converges to~$1$ in~$\ell^1$, but
does not converge uniformly on compact subsets of~$\C$ since, in particular,
it does not converge pointwise at~$z=i$.
\end{proof}


\section{The Algebra of Bounded Power Series} \label{sec:algebra}
Sum and product of functions make the set~$\bounded$ a ring.
Since entire functions form an integral domain, the ring~$\bounded$ is an integral domain.
As shown in the introduction, the ring~$\bounded$ is invariant under
backward shift~$\sigma\p{(a_n)_{n\geq 0}} = (a_{n+1})_{n\geq 0}$.
Properties of the ring~$\bounded$ and the backward shift are collected in
Theorem~\ref{thm:shift} in the introduction and presented
in this section as a series of individual results.

While~$\sigma^k$ acts on~$\bounded$ by forgetting the first $k$~elements,
all but one (namely~$a_0$) can be recovered from the tail.
This fact translates into algebraic language as follows:

\begin{lemma} \label{lem:ker_sigma}
For every~$k\geq 1$ we have~$\ker \sigma^k = \ker \sigma$.
\end{lemma}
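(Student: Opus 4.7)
The plan is to reduce the statement to the elementary observation that a polynomial bounded on the real line must be constant. The inclusion $\ker \sigma \subseteq \ker \sigma^k$ is immediate for every $k\geq 1$, since $\sigma f = 0$ implies $\sigma^k f = \sigma^{k-1}(\sigma f) = 0$. So the content lies entirely in the reverse inclusion $\ker \sigma^k \subseteq \ker \sigma$.

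For the reverse inclusion, I would fix $f(x) = \sum_{n\geq 0} a_n x^n$ in $\ker \sigma^k$ and translate the condition into coefficient language. Since $\sigma$ acts on coefficient sequences by $\sigma((a_n)_{n\geq 0}) = (a_{n+1})_{n\geq 0}$, iterating gives $\sigma^k((a_n)_{n\geq 0}) = (a_{n+k})_{n\geq 0}$. Thus $\sigma^k f = 0$ is equivalent to $a_n = 0$ for every $n\geq k$, which says precisely that $f$ is a polynomial of degree at most $k-1$.

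The key step is now to invoke membership in $\bounded$: by assumption $f: \R \to \C$ is bounded on the real line, and a non-constant polynomial is unbounded on $\R$ (its modulus tends to infinity as $\abs{x}\to\infty$, which also follows from Lemma~\ref{lem:max_inf}). Therefore $f$ must be a constant, i.e.\ $a_n = 0$ for every $n\geq 1$, which means $\sigma f = 0$ and hence $f \in \ker \sigma$. There is essentially no obstacle here: the only subtlety is to remember that $\ker \sigma^k$ is a subset of $\bounded$ and not of the full ring of formal power series, since over formal power series the kernels $\ker \sigma^k$ are obviously distinct (they consist of all polynomials of degree $<k$). Boundedness collapses all of these kernels to the one-dimensional space of constants.
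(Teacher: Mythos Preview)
Your proof is correct and follows essentially the same route as the paper: an element of $\ker\sigma^k$ is a polynomial, a bounded polynomial on $\R$ is constant, and constants lie in $\ker\sigma$. Your write-up is simply more detailed than the paper's, but the argument is identical.
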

\begin{proof}
If~$f(x)=\sum_{n\geq 0} a_n x^n \in \ker \sigma^k$, then
the sequence~$(a_n)_n$ is eventually~$0$, that is, the function~$f$ is a bounded polynomial,
and thus a constant.
It follows that~$f(x) \in \ker \sigma$. This proves~$\ker \sigma^k \subseteq \ker \sigma$.
The converse~$\ker \sigma \subseteq \ker \sigma^k$ is trivial.
\end{proof}

The map~$\sigma:\bounded\to\bounded$ is not surjective,
but it is not far from being surjective:

\begin{proposition} \label{prop:strict_inclusions}
Define~$\sigma^\infty (\bounded) = \bigcap_{k\geq 0} \sigma^k(\bounded)$.
The inclusions 
\begin{equation}
	\bounded \supset \sigma(\bounded) \supset \sigma^2(\bounded)
		\supset \cdots \supset \sigma^\infty (\bounded)
\end{equation}
are all strict.
For every~$0\leq k\leq \infty$ the set~$\sigma^k(\bounded)$ is $\ell^1$-dense in~$\bounded$.
\end{proposition}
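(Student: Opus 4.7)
The plan is to handle the two claims separately. For the strict inclusions I would exhibit, for every $k\geq 0$, a single witness $f_k\in \sigma^k(\bounded)\setminus \sigma^{k+1}(\bounded)$, which automatically separates $\sigma^k(\bounded)$ from $\sigma^\infty(\bounded)$ since $\sigma^\infty(\bounded) \subseteq \sigma^{k+1}(\bounded)$. For $\ell^1$-density I would prove the stronger fact that $\sigma^\infty(\bounded)$ itself is $\ell^1$-dense in $\bounded$, via Gaussian-damped approximations.

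For the strict inclusions I propose $f_k := \sigma^k(\sin)$. Trivially $f_k \in \sigma^k(\bounded)$. Suppose, for contradiction, that $f_k \in \sigma^{k+1}(\bounded)$; then there is a polynomial $p$ of degree at most $k$ with $p(x) + x^{k+1} f_k(x)$ bounded on $\R$. Writing $\sin(x) = q_k(x) + x^k f_k(x)$, where $q_k$ is the degree-less-than-$k$ Taylor truncation of $\sin$, and multiplying by $x$ gives $x^{k+1} f_k(x) = x\sin(x) - xq_k(x)$. Therefore $P(x) + x\sin(x)$ is bounded on $\R$ for the polynomial $P := p - xq_k$. Dividing by $x$ yields $\sin(x) = -P(x)/x + \O(1/x)$ for $\abs{x}$ large; the right-hand side has a limit in $[-\infty,+\infty]$ at $+\infty$ because $P(x)/x$ is rational, while $\sin$ oscillates, a contradiction.

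For density, fix $f\in \bounded$ and set $g_\lambda(x) := f(x)\exp(-\lambda x^2)$ for $\lambda > 0$. For every $k\geq 0$ the product $x^k g_\lambda(x)$ is entire and bounded on $\R$, since the Gaussian factor dominates any polynomial against the bounded factor~$f$; hence $x^k g_\lambda \in \bounded$ and $g_\lambda = \sigma^k(x^k g_\lambda) \in \sigma^k(\bounded)$. Letting $k$ vary, $g_\lambda \in \sigma^\infty(\bounded)$.

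It remains to check $g_\lambda\to f$ in $\ell^1$ as $\lambda\to 0$. Exactly as in the proof of Theorem~\ref{thm:topology}(\ref{thm:topology:l1}), Young's convolution inequality on coefficient sequences gives
\[
\norm{f-g_\lambda}_{\ell^1} \leq \norm{f}_{\ell^1}\norm{1-\exp(-\lambda z^2)}_{\ell^1} = \norm{f}_{\ell^1}\p{\exp(\lambda)-1},
\]
which tends to $0$, using $\norm{f}_{\ell^1}<\infty$ for every $f\in\bounded$ by absolute convergence at $z=1$. This establishes $\ell^1$-density of $\sigma^\infty(\bounded)$ and hence of each $\sigma^k(\bounded)$. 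The main delicate step will be the oscillation argument ruling out $P(x) + x\sin(x)$ being bounded; the remaining content is a direct reuse of the Gaussian-damping technique already introduced for Theorem~\ref{thm:topology}.
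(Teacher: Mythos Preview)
Your proof is correct and follows the paper's strategy closely: the same witness $\sigma^k(\sin)$ for strictness and the same Gaussian damping for density. Two minor differences in execution are worth noting. For strictness, the paper invokes Lemma~\ref{lem:ker_sigma} ($\ker\sigma^k=\ker\sigma$) to reduce immediately to the identity $\sin(x)-a=(g(x)-g(0))/x$ with $g\in\bounded$, whereas you unwind the definition of $\sigma^{k+1}(\bounded)$ by hand to reach $P(x)+x\sin(x)$ bounded; both contradictions are the same oscillation obstruction, and your route simply avoids citing the kernel lemma. For density, the paper shows that the specific family $\{z^m\exp(-\lambda z^2)\}$ lies in $\sigma^\infty(\bounded)$ and appeals to its $\ell^1$-density in $\ell^1(\N,\C)$ established in Theorem~\ref{thm:topology}; your argument is slightly more direct, multiplying an arbitrary $f\in\bounded$ by $\exp(-\lambda z^2)$ and using Young's inequality with the finite factor $\norm{f}_{\ell^1}$ rather than $\norm{z^m}_{\ell^1}$. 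One cosmetic point: since coefficients may be complex, $P(x)/x$ has a limit in $\C\cup\{\infty\}$ rather than in $[-\infty,+\infty]$, but the contradiction with the non-convergence of $\sin$ is unaffected.
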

\begin{proof}
Let~$k\geq 0$. Identify the function~$\sin(x)$ with its power series.
We claim that~$\sigma^k\sin(x) \in  \sigma^k(\bounded)\setminus \sigma^{k+1}(\bounded)$.
Clearly we have~$\sigma^k\sin(x) \in  \sigma^k(\bounded)$.
For the sake of contradiction, assume that~$\sigma^k\sin(x) \in \sigma^{k+1}(\bounded)$.
Then there is~$g(x)\in \bounded$
such that~$\sigma^k \sin(x) = \sigma^k (\sigma g(x))$ and,
by Lemma~\ref{lem:ker_sigma}, there is~$a\in \C$ such that
\[
	\sin(x) - a = \frac{g(x)-g(0)}{x}.
\]
This is impossible since the right hand side converges to~$0$ as~$x\to\infty$.

It remains to prove that~$\sigma^\infty (\bounded)$ is $\ell^1$-dense in~$\bounded$.
Let~$m\in \N$ and~$\lambda>0$.
Since the function~$x^k (x^m \exp(-\lambda x^2))$ is bounded for every~$k\geq 0$,
we have~$x^m \exp(-\lambda x^2) \in \sigma^\infty (\bounded)$.
By the proof of Theorem~\ref{thm:topology} the
set~$\{z^m \exp(-\lambda z^2)\}_{\lambda>0,m\in \N}$
is dense in~$\ell^1(\N,\C)$, thus in~$\bounded$. This shows that~$\sigma^\infty (\bounded)$
is dense in~$\bounded$. It follows that every~$\sigma^k(\bounded)$ is dense in~$\bounded$.
\end{proof}

The following result characterizes the elements of~$\sigma^k(\bounded)$:

\begin{proposition} \label{prop:peano}
Let~$f\in \bounded$ and~$k\geq 1$. Then~$f\in \sigma^k(\bounded)$ if and only if
there are~$c_1,\ldots,c_{k-1} \in \C$ such that
\begin{equation} \label{eq:peano_prop}
	f\p{\frac{1}{x}} = c_1 x + c_2 x^2 + \cdots + c_{k-1} x^{k-1} + \mathcal O (x^k) \as x\to 0.
\end{equation}
In particular, the set~$\sigma^k(\bounded)$ is a subring of~$\bounded$.
\end{proposition}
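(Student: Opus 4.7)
My plan is to reduce both directions to the elementary identity $g(x) = h(x) + x^k f(x)$, where $h(x) = b_0 + b_1 x + \cdots + b_{k-1} x^{k-1}$, that holds whenever $g(x) = \sum_{n\geq 0} b_n x^n \in \bounded$ satisfies $\sigma^k g = f$. Substituting $y = 1/x$ gives $f(1/y) = y^k\p{g(1/y) - h(1/y)}$ for $y \neq 0$, and the whole argument will exploit this identity together with the boundedness of $g$ on~$\R$.

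For the forward direction, I would pick any $g \in \bounded$ with $\sigma^k g = f$. Since $g$ is bounded on $\R$, we have $g(1/y) = \O(1)$ as $y \to 0$. Expanding $y^k h(1/y) = b_0 y^k + b_1 y^{k-1} + \cdots + b_{k-1} y$ and substituting into the identity, it immediately drops out that
\[
	f(1/y) = -b_{k-1} y - b_{k-2} y^2 - \cdots - b_1 y^{k-1} + \O(y^k) \as y \to 0,
\]
which is \eqref{eq:peano_prop} with $c_j = -b_{k-j}$.

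For the reverse direction, given the expansion \eqref{eq:peano_prop}, I would set $b_0 = 0$, $b_j = -c_{k-j}$ for $1 \leq j \leq k-1$, and define $g(x) = h(x) + x^k f(x)$. By construction $g$ is entire and $\sigma^k g = f$, so the only nontrivial step is to verify that $g$ is bounded on $\R$. Rewriting \eqref{eq:peano_prop} at infinity via $y = 1/x$ gives $y^k f(y) = c_1 y^{k-1} + \cdots + c_{k-1} y + \O(1)$ as $\abs{y}\to\infty$, and the choice of $h$ makes the polynomial parts of $h(y)$ and $y^k f(y)$ cancel exactly, leaving $g(y) = \O(1)$ as $\abs{y}\to\infty$. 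Continuity of $g$ on $\R$ then upgrades this to boundedness on all of $\R$.

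For the subring claim, since $\bounded$ is already a ring it suffices to verify that \eqref{eq:peano_prop} is preserved under sums and products. Sums are trivial. For products, I would write $f_j(1/x) = P_j(x) + R_j(x)$ with $P_j$ a polynomial of degree at most $k-1$ vanishing at $x=0$ and $R_j(x) = \O(x^k)$; then
\[
	(f_1 f_2)(1/x) = P_1 P_2 + P_1 R_2 + R_1 P_2 + R_1 R_2,
\]
and because each $P_j = \O(x)$, the cross terms are $\O(x^{k+1})$ and the last is $\O(x^{2k})$. Truncating $P_1 P_2$ at degree $k-1$ and absorbing the remainder into the $\O(x^k)$ error yields \eqref{eq:peano_prop} for $f_1 f_2$. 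I expect this last paragraph to be the only delicate point: since $\sigma$ does not respect multiplication, closure under products is not obvious a priori, and the lucky fact that makes it work is that the polynomial parts $P_j$ vanish at the origin, forcing the cross interactions to be subleading.
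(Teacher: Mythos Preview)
Your proof is correct and follows essentially the same route as the paper: both hinge on the identity $g(x)=h(x)+x^k f(x)$ with $h$ a polynomial of degree at most $k-1$, translate it via $x\mapsto 1/x$ to obtain~\eqref{eq:peano_prop} with $c_j=-b_{k-j}$, and then deduce closure under products directly from the asymptotic expansion (the paper records this as $\sigma^k(\bounded)\sigma^h(\bounded)\subseteq\sigma^{\min(k+1,h+1)}(\bounded)$, which your cross-term estimate $P_1R_2=\O(x^{k+1})$ reproduces). Your write-up is somewhat more explicit in both the reverse direction and the product computation, but there is no substantive difference in strategy.
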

\begin{proof}
We have~$f(x)\in \sigma^k (\bounded)$
if and only if there are~$a_0,\ldots,a_{k-1} \in \C$ such that
\[
	a_0 + a_1 x + \cdots a_{k-1} x^{k-1} + x^k f(x) = \mathcal O(1) \as x\to 0,
\]
which is the same as~\eqref{eq:peano_prop} with~$c_j=-a_{k-j}$. It remains to prove
that~$\sigma^k(\bounded)$ is a ring.
Since~$\sigma$ is linear, the set~$\sigma^k(\bounded)$ is closed under sums.
From~\eqref{eq:peano_prop} it follows that
\[
	\sigma^k (\bounded) \sigma^h (\bounded) \subseteq \sigma^{\min(k+1,h+1)} (\bounded).
\]
for every~$k,h\geq 1$.
Taking~$k=h$ shows that~$\sigma^k(\bounded)$ is closed under products.
\end{proof}

We will need the following algebraic result, which we state in general:

\begin{lemma} \label{lem:B_algebra}
Let~$(B,+)$ an abelian group and~$\sigma$ an endomorphism
satisfying~$\ker \sigma^k = \ker \sigma$ for every~$k\geq 1$.
Define~$\sigma^\infty (B)= \bigcap_{k\geq 0} \sigma^k(B)$.
Then the restriction of~$\sigma$ to~$\sigma^\infty (B)$ is one-to-one.
\end{lemma}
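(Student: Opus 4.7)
The plan is to prove injectivity of the restriction by showing that its kernel is trivial. Since $\sigma$ is an endomorphism of an abelian group, injectivity is equivalent to the statement that $x \in \sigma^\infty(B)$ and $\sigma(x) = 0$ together force $x = 0$. So I would fix such an $x$ and aim to derive $x = 0$ using the kernel-stability hypothesis.

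The key step is to exploit the inclusion $\sigma^\infty(B) \subseteq \sigma(B)$, which follows immediately from the definition of $\sigma^\infty(B)$ as the intersection $\bigcap_{k\geq 0} \sigma^k(B)$. This lets me write $x = \sigma(y)$ for some $y \in B$. Applying $\sigma$ once more yields $\sigma^2(y) = \sigma(x) = 0$, so $y \in \ker \sigma^2$. The hypothesis $\ker \sigma^2 = \ker \sigma$ then upgrades this to $\sigma(y) = 0$, which means exactly $x = 0$.

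I do not foresee any real obstacle: the kernel-stability hypothesis does all the work, and in fact only the single case $k = 2$ and only the inclusion $\sigma^\infty(B) \subseteq \sigma(B)$ are actually used. The reason for phrasing the conclusion in terms of $\sigma^\infty(B)$ rather than $\sigma(B)$ is presumably so that $\sigma$ carries the distinguished subset back into itself, making the restriction a genuine endomorphism of $\sigma^\infty(B)$ for which asking one-to-oneness as a self-map is meaningful.
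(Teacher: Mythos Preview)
Your injectivity argument is correct and coincides with the first half of the paper's proof: from $\ker\sigma=\ker\sigma^2$ one gets $\ker\sigma\cap\sigma(B)=\{0\}$, hence $\ker\sigma\cap\sigma^\infty(B)=\{0\}$. However, in this paper ``one-to-one'' means \emph{bijective} (a one-to-one correspondence), not merely injective: the paper's own proof of the lemma continues with ``It remains to prove surjectivity,'' and the same convention appears in Proposition~\ref{prop:spectrum}, where $\sigma-\lambda$ is called one-to-one and is explicitly shown to be surjective. So your proposal is missing half the statement, and in fact the more substantial half. Your closing speculation about why the conclusion is stated for $\sigma^\infty(B)$ rather than $\sigma(B)$ overlooks exactly this: the point is that $\sigma$ maps $\sigma^\infty(B)$ \emph{onto} itself.

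For surjectivity the paper argues as follows. Given $a\in\sigma^\infty(B)$, use $a\in\sigma^2(B)$ to choose $b\in\sigma(B)$ with $\sigma(b)=a$; the claim is that this $b$ already lies in $\sigma^\infty(B)$. Fix $k\geq 1$. Since $a\in\sigma^{k+1}(B)$, write $a=\sigma^{k+1}(d)$ for some $d\in B$. Then $b-\sigma^k(d)\in\ker\sigma\cap\sigma(B)=\{0\}$ (the same identity you derived), so $b=\sigma^k(d)\in\sigma^k(B)$. As $k$ was arbitrary, $b\in\sigma^\infty(B)$.
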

\begin{proof}
From~$\ker \sigma = \ker \sigma^2$ it follows that
\begin{equation} \label{eq:ker_cap_im}
	\ker\sigma \cap \sigma(B) = \{0\},
\end{equation}
which in turn implies~$\ker \sigma \cap \sigma^\infty (B) = \{0\}$,
proving that the restriction of~$\sigma$ to~$\sigma^\infty (B)$ is injective.
It remains to prove surjectivity. Let~$a \in \sigma^\infty (B)$.
Since~$a\in \sigma^2(B)$, there is~$b\in \sigma(B)$ such that~$\sigma(b)=a$.
We claim that~$b\in \sigma^\infty (B)$.
Fix an arbitrary~$k\geq 1$.
Since~$a\in \sigma^{k+1}(B)$, there is~$d\in B$ such that~$a=\sigma^{k+1}(d)$.
It follows that~$b-\sigma^k(d) \in \ker\sigma \cap \sigma(B)$ and therefore,
from~\eqref{eq:ker_cap_im}, that~$b=\sigma^k(d)$.
Since~$k$ was arbitrary, it follows that~$b\in \sigma^\infty (B)$.
This proves surjectivity.
\end{proof}

From Lemma~\ref{lem:ker_sigma} and Lemma~\ref{lem:B_algebra} it immediately follows that:

\begin{corollary} \label{cor:one-to-one}
The restriction of~$\sigma$ to~$\sigma^\infty(\bounded)$ is one-to-one.
\end{corollary}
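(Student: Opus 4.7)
The plan is to obtain this corollary as an immediate packaging of Lemma~\ref{lem:ker_sigma} and Lemma~\ref{lem:B_algebra}. First I would note that $(\bounded,+)$ is an abelian group under coefficient-wise (equivalently, pointwise) addition, and that the backward shift $\sigma$ is $\C$-linear, hence in particular a group endomorphism of $(\bounded,+)$. The kernel hypothesis required by Lemma~\ref{lem:B_algebra}, namely $\ker\sigma^k=\ker\sigma$ for every $k\geq 1$, is exactly the content of Lemma~\ref{lem:ker_sigma}.

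Before applying Lemma~\ref{lem:B_algebra} I would briefly verify that $\sigma$ genuinely restricts to a self-map of $\sigma^\infty(\bounded)$: if $f\in\sigma^\infty(\bounded)$, then for every $k\geq 0$ we have $f\in\sigma^{k+1}(\bounded)$, hence $\sigma(f)\in\sigma(\sigma^{k+1}(\bounded))=\sigma^{k+2}(\bounded)\subseteq\sigma^k(\bounded)$. Taking the intersection over $k$ gives $\sigma(f)\in\sigma^\infty(\bounded)$.

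With this checked, the corollary follows by setting $B=\bounded$ in Lemma~\ref{lem:B_algebra}, which asserts that the restriction of $\sigma$ to $\sigma^\infty(B)$ is both injective and surjective. There is no real obstacle: all substantive work has already been carried out, with injectivity coming from the identity $\ker\sigma\cap\sigma(B)=\{0\}$ (a direct consequence of $\ker\sigma=\ker\sigma^2$) and surjectivity coming from the inductive lift inside $\sigma^\infty(B)$ that is performed in the proof of Lemma~\ref{lem:B_algebra}.
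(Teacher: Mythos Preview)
Your proposal is correct and matches the paper's own proof, which is just the single sentence ``From Lemma~\ref{lem:ker_sigma} and Lemma~\ref{lem:B_algebra} it immediately follows.'' The extra verification you include (that $\sigma$ restricts to a self-map of $\sigma^\infty(\bounded)$) is harmless and implicit in the paper.
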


It is well known that the point spectrum of backward shift on~$\ell^1(\N,\C)$
is equal to the interior of the unit circle.
As we will see,
restricting the backward shift from~$\ell^1(\N,\C)$ to its dense subset~$\bounded$
restricts the point spectrum to a single eigenvalue.

\begin{proposition} \label{prop:spectrum}
The only eigenvalue of~$\sigma: \bounded \to \bounded$ is~$0$.
For every~$\lambda\neq 0$ the map~$\sigma-\lambda: \bounded \to \bounded$
is one-to-one.
\end{proposition}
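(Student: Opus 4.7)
The plan is to translate the eigenvalue equation $\sigma f = \lambda f$ into a recursion on coefficients and then exploit the fact that every element of~$\bounded$ must converge on the whole real line. Writing $f(x) = \sum_{n\geq 0} a_n x^n$, the definition $\sigma\p{(a_n)_{n\geq 0}} = (a_{n+1})_{n\geq 0}$ turns the identity $\sigma f = \lambda f$ into the first-order recursion $a_{n+1} = \lambda a_n$ for every $n\geq 0$, which gives $a_n = a_0 \lambda^n$.

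From here both claims follow by separating on $\lambda$. If $\lambda = 0$, the recursion forces $a_n = 0$ for $n\geq 1$ and leaves $a_0$ free, so $\ker \sigma$ consists of the constant functions; in particular $0$ is indeed an eigenvalue. If $\lambda \neq 0$, the only candidate for $f$ is the geometric series $a_0 \sum_{n\geq 0} (\lambda x)^n$, whose radius of convergence equals $1/\abs{\lambda} < \infty$; since membership in $\bounded$ requires pointwise convergence on all of~$\R$, this forces $a_0 = 0$, hence $f = 0$. Therefore $\ker(\sigma - \lambda) = \{0\}$ and the map $\sigma - \lambda$ is one-to-one.

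There is no real obstacle here: the argument reduces immediately to the mismatch between a non-zero geometric ratio and convergence on all of~$\R$, and boundedness itself plays no role beyond the convergence condition already built into the definition of~$\bounded$. The proposition is thus strictly sharper than the analogous statement for $\ell^1(\N,\C)$ (where the point spectrum is the open unit disk), and it is precisely the restriction to power series convergent on the unbounded set~$\R$ that collapses the point spectrum to~$\{0\}$.
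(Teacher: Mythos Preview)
Your injectivity argument is correct and essentially identical to the paper's: you write the eigenvector as a geometric series with ratio~$\lambda$, the paper writes it as the rational function $f(z)=f(0)/(1-\lambda z)$; both conclude from the finite radius of convergence (equivalently, the pole at $z=\lambda^{-1}$) that $f$ cannot lie in~$\bounded$ unless $f(0)=0$.

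However, you have only proved half of the proposition. In this paper ``one-to-one'' means \emph{bijective}, not merely injective: see the proof of Lemma~\ref{lem:B_algebra}, which establishes injectivity and surjectivity separately under that same wording, and the remark after Theorem~\ref{thm:shift} in the introduction, where the content is that $\sigma-\lambda$ is \emph{invertible} for every $\lambda\neq 0$. Your argument stops at $\ker(\sigma-\lambda)=\{0\}$ and never addresses surjectivity. The paper does this explicitly: given $g\in\bounded$, the equation $(\sigma-\lambda)f=g$ rewrites as $(1-\lambda z)f(z)=f(0)+z\,g(z)$, and the unique entire solution is
\[
  f(z) \;=\; \frac{-\lambda^{-1} g(\lambda^{-1}) + z\, g(z)}{1-\lambda z},
\]
obtained by choosing the constant term so that the numerator vanishes at $z=\lambda^{-1}$. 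One then checks that this $f$ is bounded on~$\R$, which follows from the boundedness of $g$ on~$\R$. This last step genuinely uses the bound on~$g$, not just its convergence; so your closing remark that ``boundedness itself plays no role beyond the convergence condition'' is incorrect for the full statement. It is precisely the boundedness of $g$ that places the preimage $f$ back in~$\bounded$ and makes $\sigma-\lambda$ surjective.
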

\begin{proof}
Let~$\lambda\in \C\setminus \{0\}$. Let~$f\in \bounded$
be a solution of~$\sigma f = \lambda f$. Then
\[
	f(z) = \frac{f(0)}{1-\lambda z}.
\]
If~$f(0)\neq 0$, this function has a pole at~$z=\lambda^{-1}$ and its power series cannot converge
on the whole real line. Therefore~$f=0$.
This proves that~$\lambda$ is not an eigenvalue.

It remains to prove that~$\sigma-\lambda: \bounded \to \bounded$ is surjective.
For every~$g\in \bounded$ the equation~$(\sigma - \lambda) f = g$ has solution
\[
	f(z) = \frac{-\lambda^{-1} g(\lambda^{-1}) + z g(z)}{1-\lambda z}.
\]
Let us prove that~$f\in\bounded$. Since the numerator has a zero of order at least~$1$
at~$z=\lambda^{-1}$, the function~$f$ is entire.
Since~$g$ is bounded on the real line, so is $f$.
\end{proof}

\begin{proof} [Proof of Theorem~\ref{thm:shift}]
Theorem~\ref{thm:shift} is the collection of Proposition~\ref{prop:strict_inclusions},
Proposition~\ref{prop:peano}, Corollary~\ref{cor:one-to-one},
and Proposition~\ref{prop:spectrum}.
\end{proof}


\bibliographystyle{plain}
\bibliography{refs}

\end{document}